\newtheorem{propo}{Proposition}[section]
\newtheorem{lemma}[propo]{Lemma}
\newtheorem{cor}[propo]{Corollary}
\newtheorem{corollary}[propo]{Corollary}
\newtheorem{theorem}[propo]{Theorem}
\numberwithin{equation}{section}
\newcommand{\CC}{{\mathbb C}}
\newcommand{\RR}{{\mathbb R}}
\newcommand{\FF}{{\mathbb F}}
\newcommand{\GL}{\mathrm {GL}}
\newcommand{\GU}{\mathrm {GU}}
\newcommand{\SU}{\mathrm {SU}}
\begin{document}

\title{On the singular value decomposition over finite fields and orbits of $GU \times GU$}

\author{Robert M. Guralnick}
\address{Department of Mathematics, University of Southern California,
Los Angeles, CA 90089-2532, USA}
\email{guralnic@usc.edu}

\date{\today}
\keywords{singular value decomposition over fiinite fields}
\subjclass[2000]{15B57, 15A18, 15A22, 20G15, 20G40}

\dedicatory{Dedicated to the memory of Tonny Springer}

\thanks{The  author was partially supported by the NSF
grant DMS-1901595 and a Simons Foundation Fellowship.}

\begin{abstract}   The singular value decomposition of a complex matrix is a fundamental concept in linear algebra and has proved 
extremely useful in many subjects. 
It is less clear what the situation is over a finite field.   In this paper, 
we classify the orbits of $\GU_m(q) \times GU_n(q)$
on $M_{m \times n}(q^2)$ (which is the analog of the singular value decomposition).   The proof involves Kronecker's theory of pencils and the 
Lang-Steinberg theorem for algebraic groups.
 Besides the motivation mentioned above, this problem came up
in a recent paper of Guralnick, Larsen and Tiep \cite{GLT}  where a 
  concept of character level for the complex irreducible characters of finite, general or special,
linear and unitary groups was studied and bounds on the number of orbits was needed.
A consequence of this work determines possible pairs of Jordan forms for nilpotent matrices
of the form $AA^*$ and $A^*A$ over a finite field and $AA^{\top}$ and $A^{\top}A$ over arbitrary fields. 
\end{abstract}

\maketitle

\tableofcontents

\section{Introduction}

The singular value decomposition of a complex $m \times n$ matrix  $A$ is a very fundamental
topic and has applications in many areas of mathematics.  The eigenvalues of 
$A^*A$ are all non-negative real numbers.   The singular values are the positive square roots of the nonzero eigenvalues of $A^*A$ 
(which are the same as those of $AA^*$).  
   This can be interpreted in terms of orbits of
$\GU_m(\CC) \times GU_n(\CC)$ acting on $M_{m \times n}(\CC)$ (the action is given by
$A \rightarrow XAY^*$ ).   The result is that the orbits are in bijection with the similarity
classes of $A^*A$ (either under $\GU$ or $\GL$) 
and each orbit contains a unique  matrix where the entries on
the diagonal are non-negative and  non-increasing and all other entries are $0$.

This fundamental result is not hard to prove but it depends on the fact that any complex Hermitian matrix is diagonalizable with all eigenvalues real.    The results hold for any algebraically closed field of characteristic $0$ with respect to a (positive definite) inner product (and any two such are equivalent). 

For many reasons, the result becomes much more complicated if the field is not algebraically closed or the characteristic is not $0$.  Even over $\mathbb{C}$, the result is more complicated if
we consider Hermitian forms which are not definite (in particular, the stabilizer of
the form is not compact).  In this paper, we  are  interested in an analog for  finite 
fields.  In this case any two nondegenerate Hermitian forms are equivalent.    This was asked about on math.overflow in
November 2009
(and the answers indicated incorrectly that there was not much to say).

Fix a prime power $q$ and let $F$ be the field of $q^2$ and $F_0$ the subfield with $q$ elements.
Let $K$ be an algebraic closure of $F$.
We will completely characterize the orbits of $\GU_m(q) \times GU_n(q)$ on $M_{m \times n}(q^2)$
and in particular give formulas for the number of orbits.   This problem seems quite natural from a linear algebra point of view and was used in
a recent paper of Guralnick, Larsen and Tiep \cite{GLT} in proving some results about complex irreducible characters of $\GU_m(q)$ and
$\SU_m(q)$.

The problem essentially reduces to two cases.  The first is the case $m=n$ and $A$ is
invertible.  In this case, it is easy to see that an analog of the complex case holds:  
the orbits are determined by the conjugacy class of $A^*A$.   Thus, 
the orbits of invertible matrices are in bijection with the conjugacy classes of 
$\GL_n(q)$ as well as the unitary orbits on nondegenerate Hermitian matrices.  

The second case is when $A^*A$ is nilpotent (for definite Hermitian forms over $\mathbb{C}$,
this  does not occur unless $A=0$).  In this case, the number of orbits, 
$f(m,n)$, is independent of $q$ (and the characteristic).  We will give a formula for
$f(m,n)$ in terms of  partitions. In this case, it is not true that the orbit
is determined by the similarity classes of $A^*A$ (or even the pair of similarity
classes $A^*A$ and $AA^*$).   

The proof has two basic ingredients.  The first is the classification of the representations
a certain quiver (closely related to Kronecker's theory of matrix pencils).   This gives the
orbits of the action of $\GL_m(K)  \times GL_n(K)$ on $M_{m \times n}(K) \times M_{n \times m}(K)$ 
(where the action is given by $(A,B) \rightarrow (XAY^{-1}, YBX^{-1})$).  We then use the Lang-Steinberg theorem (and the fact that stabilizers are connected) to determine the orbits of 
$\GU_m(q) \times \GU_n(q)$ on $M_{m \times n}(q^2)$.   

We now state our main results.  It is slightly more convenient to state some of the results in terms of
linear transformations rather than matrices.   So fix a pair of vectors spaces $W, V$ over $F$
of dimensions $m$ and $n$ with nondegenerate Hermitian forms on each.   If $T:V \rightarrow W$
is an $F$-linear transformation, then we can define $T^*: W \rightarrow V$ 
by $(Tv, w)= (v, T^*w)$ for $v \in V$ and $w \in W$ (the first inner product is defined on $W$ and
the second on $V$).  If we choose orthonormal bases for $V$ and $W$, then the matrix of $T^*$
is obtained by the matrix of $T$ in the usual way.  

The first (quite elementary) result shows how we reduce to the two cases.

\begin{theorem} \label{reduce1}.  Let $T: V \rightarrow W$ be a linear transformation.
Then there are canonical decompositions  $V=V_0  \perp V_1$ and $W=W_0 \perp W_1$ such
that 
\begin{enumerate}
\item $\dim V_0= \dim W_0$,  $TV_0 = W_0$,  $T^*W_0 = V_0$
 (so $T^*T$ is bijective on $V_0$);
\item $TV_1 \subset W_1$, $T^*W_1 \subset V_1$ and $T^*T$ is nilpotent
on $V_1$.
\end{enumerate}
\end{theorem}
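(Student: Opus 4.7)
The plan is to take $V_0$ and $V_1$ to be the pieces of the Fitting decomposition of $S := T^*T$ acting on $V$, and similarly $W_0, W_1$ from the Fitting decomposition of $R := TT^*$ acting on $W$. Concretely, for any $k$ larger than $\dim V$ and $\dim W$, I would set $V_0 = S^k V$, $V_1 = \Ker S^k$, $W_0 = R^k W$, $W_1 = \Ker R^k$. Standard Fitting theory then gives $V = V_0 \oplus V_1$ with $S$ bijective on $V_0$ and nilpotent on $V_1$, and likewise on the $W$ side. Since these subspaces are defined purely in terms of $T$ (through $S$ and $R$), the decompositions are canonical.

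The next step is to observe that $S$ and $R$ are self-adjoint for the Hermitian forms: $(Sv, v') = (Tv, Tv') = (v, Sv')$, and likewise for $R$. Iterating, $(S^k v, v') = (v, S^k v')$. If $v \in V_0$, write $v = S^k u$; for any $v' \in V_1$ we get $(v,v') = (S^k u, v') = (u, S^k v') = 0$, so $V_0 \perp V_1$. The identical argument gives $W_0 \perp W_1$. This orthogonality is the one place in the argument where the Hermitian structure is really used, and is the main subtlety beyond a purely module-theoretic Fitting decomposition; it is also what promotes the direct sums $V = V_0 \oplus V_1$ and $W = W_0 \oplus W_1$ to the orthogonal direct sums asserted in the theorem.

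To see that $T$ and $T^*$ respect the decomposition, I would use the identities $TS = RT$ and $T^*R = ST^*$, both immediate from associativity. If $v = S^k u \in V_0$ then $Tv = R^k(Tu) \in W_0$; if $v \in V_1$ then $R^k(Tv) = T(S^k v) = 0$, so $Tv \in W_1$. The dual statements $T^*W_0 \subset V_0$ and $T^*W_1 \subset V_1$ follow in the same way. Finally, $T$ restricted to $V_0$ is injective, since $Tv = 0$ would force $Sv = T^*Tv = 0$, contradicting bijectivity of $S$ on $V_0$; by symmetry $T^*|_{W_0}$ is injective as well. Combining $TV_0 \subset W_0$ and $T^*W_0 \subset V_0$ with these injectivity statements yields $\dim V_0 \le \dim W_0 \le \dim V_0$, hence equality and $TV_0 = W_0$, $T^*W_0 = V_0$. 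Apart from the orthogonality step above, the argument is a routine consequence of Fitting's lemma.
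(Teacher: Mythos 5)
Your proof is correct. It takes essentially the same route as the paper's argument (the lemma in the Further Remarks section): both rest on the Fitting decomposition of $V$ relative to $S = T^*T$, and both derive orthogonality of the two pieces from the self-adjointness of $S$. The one real difference is that you also run Fitting's lemma on the $W$ side with $R = TT^*$ and then reconcile the two decompositions via the intertwining relations $TS = RT$, $T^*R = ST^*$ together with a dimension count, whereas the paper defines the $W$-side decomposition directly by taking $W_0 := TV_0$ and $W_1 := W_0^\perp$, then checks nondegeneracy of $W_0$ and the inclusion $TV_1 \subset W_1$ by a short computation with the form. Your version is more symmetric in $T$ and $T^*$ and makes the ``canonical'' claim self-evident on both sides; the paper's is slightly leaner, needing only $T^*T$ and never mentioning $TT^*$. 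Both are complete.

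One small remark: you assert $V_0 \perp V_1$ by showing $(v, v') = 0$ for $v \in V_0$, $v' \in V_1$, which gives $V_0 \subseteq V_1^\perp$; to conclude $V = V_0 \perp V_1$ (i.e.\ that $V_1^\perp$ is exactly $V_0$ and that the restricted forms are nondegenerate) you are implicitly using that $\dim V_0 + \dim V_1 = \dim V$ from the Fitting direct sum together with nondegeneracy of the ambient form. This is fine, but worth saying, since it is precisely the step where orthogonal-direct-sum structure (and not just a vector-space direct sum) is established. The paper makes the analogous point on the $W$ side when it checks that $W_0$ is nonsingular.
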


The invertible case is quite easy.   There is no harm in assuming that $W=V$ and
the Hermitian forms are the same.    If $G$ is a finite group, let $k(G)$ denote
the number of conjugacy classes of $G$. 

\begin{theorem}  \label{invertible} Let $T: V \rightarrow V$ be an invertible linear transformation.  
\begin{enumerate}
\item  $T^*T$ is conjugate to an element of $\GL_n(q)$;
\item  Any element of $\GL_n(q)$ is conjugate to $T^*T$ for some $T \in GL_n(q^2)$;
\item  The map $T \rightarrow T^*T$ induces
bijections between the orbits of $\GU_n(q) \times GU_n(q)$ on $\GL_n(q^2)$
and conjugacy classes of $\GL_n(q)$ and $GU_n(q)$ orbits on invertible Hermitian
matrices;
\item The number of orbits is $k(GL_n(q))$.    
\end{enumerate}
\end{theorem}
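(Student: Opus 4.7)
\medskip

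\noindent\textbf{Proof plan.} Parts (1) and (2) are direct computations that I would prove first. For (1), note that $H := T^*T$ is Hermitian since $(T^*T)^* = T^*T$, and any Hermitian $H$ over $\FF_{q^2}$ satisfies $H^{(q)} = H^\top$, where $H^{(q)}$ applies the $q$-Frobenius entrywise. Since every square matrix is similar to its transpose, $H$ is $\GL_n(q^2)$-similar to $H^{(q)}$, so the invariant factors of $H$ lie in $\FF_q[x]$ and its rational canonical form is a matrix in $\GL_n(q)$ that is $\GL_n(q^2)$-conjugate to $H$. For (2), given $g \in \GL_n(q)$, the Taussky--Zassenhaus theorem (every matrix over any field is similar to a symmetric one) yields a symmetric $H \in \GL_n(q)$ similar to $g$; this $H$ is Hermitian over $\FF_{q^2}$. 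Since every nondegenerate Hermitian form on $\FF_{q^2}^{n}$ is equivalent to the standard one, there exists $S \in \GL_n(q^2)$ with $S^* H S = I$, and then $T := S^{-1}$ satisfies $H = T^*T$, so $g$ is $\GL_n(q^2)$-conjugate to $T^*T$.

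For (3), the map $\mu : T \mapsto T^*T$ is $\GU \times \GU$-equivariant with target action $Y \cdot H = Y H Y^{-1}$, since $(X T Y^*)^*(X T Y^*) = Y(T^*T)Y^{-1}$ (using $X^*X = I$ and $Y^* = Y^{-1}$ in $\GU$). Surjectivity of $\mu$ onto the invertible Hermitian matrices follows from (2), and the direct computation that $T_1^*T_1 = T_2^*T_2$ forces $(T_2T_1^{-1})^*(T_2T_1^{-1}) = I$ (so $T_2 T_1^{-1} \in \GU$) yields a bijection between $\GU \times \GU$-orbits on $\GL_n(q^2)$ and $\GU$-orbits on invertible Hermitian matrices. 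Sending such a $\GU$-orbit to the $\GL_n(q^2)$-similarity class of a representative gives, by (1), a well-defined map into the $\FF_q$-rational similarity classes of $\GL_n(q^2)$; it is surjective by (2), and the target is in natural bijection with conjugacy classes of $\GL_n(q)$ via rational canonical form.

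The main obstacle is injectivity of this last map, which I would handle via the Lang--Steinberg theorem. Suppose $H_1, H_2$ are invertible Hermitian with $g H_1 g^{-1} = H_2$ for some $g \in \GL_n(q^2)$. Applying $*$ to the conjugation relation shows $g^*g$ centralizes $H_1$, so $g^*g \in C := C_{\GL_n}(H_1)$, the algebraic-group centralizer. To produce $c \in C$ with $gc \in \GU$ it suffices to solve $c \sigma(c)^{-1} = g^{-1}\sigma(g)$, where $\sigma(x) := (x^{(q)})^{-\top}$ is the Steinberg endomorphism with fixed subgroup $\GU$; the right-hand side equals $(g^*g)^{-1} \in C$. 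Since centralizers in $\GL_n$ are connected, $C$ is a connected algebraic group, and $C$ is $\sigma$-stable because $\sigma(H_1) = H_1^{-1}$ has the same centralizer as $H_1$. Lang--Steinberg applied to $(C, \sigma|_C)$ yields the required $c$, so $gc \in \GU$ conjugates $H_1$ to $H_2$; this establishes injectivity and hence (3), and (4) is an immediate count from the chain of bijections.
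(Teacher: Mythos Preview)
Your argument for part~(2) contains a genuine error. The Taussky--Zassenhaus theorem does \emph{not} assert that every matrix over any field is similar to a symmetric matrix; it asserts that for any $A$ over a field $K$ there is a \emph{symmetric invertible} $S\in\GL_n(K)$ with $SA=A^{\top}S$. Your stronger claim is in fact false over $\FF_q$ when $q\equiv 3\pmod 4$: for instance, over $\FF_3$ the only symmetric $2\times 2$ matrix with characteristic polynomial $(x-1)^2$ is the identity (one checks that $a+c=2$, $ac-b^2=1$ forces $b^2\in\{0,2\}$, and $2$ is a nonsquare), so the Jordan block $\begin{pmatrix}1&1\\0&1\end{pmatrix}\in\GL_2(3)$ is not similar over $\FF_3$ to any symmetric matrix. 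Since your later step ``this $H$ is Hermitian over $\FF_{q^2}$'' requires the symmetric $H$ to have entries in $\FF_q$, this breaks the argument as written.

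The fix is exactly what the paper does in Lemma~\ref{hermitian1}: use Taussky--Zassenhaus correctly to obtain a symmetric $S\in\GL_n(q)$ with $Sg=g^{\top}S=g^*S$ (the last equality because $g$ has entries in $\FF_q$). Then $g$ is self-adjoint with respect to the nondegenerate Hermitian form on $\FF_{q^2}^{\,n}$ determined by $S$, and since all such forms are equivalent, a change of basis by some $P\in\GL_n(q^2)$ with $P^*SP=I$ carries $g$ to the Hermitian matrix $P^{-1}gP$. Your own argument then writes this as $T^*T$.

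Apart from this, your proof is correct and runs along the same lines as the paper: part~(1) is the implication (1)$\Rightarrow$(2) of Lemma~\ref{hermitian1}, your fibre computation $T_2T_1^{-1}\in\GU$ and the equivariance of $T\mapsto T^*T$ are straightforward, and your Lang--Steinberg argument for the injectivity in~(3) (showing $g^*g\in C=C_{\GL_n}(H_1)$, that $C$ is connected and $\sigma$-stable because $\sigma(H_1)=H_1^{-1}$, and then solving $c\,\sigma(c)^{-1}=(g^*g)^{-1}$ in $C$) is precisely the mechanism behind the last assertion of Lemma~\ref{hermitian1} and Corollary~\ref{cor:orbits}.
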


We will describe the nilpotent orbits explicitly, but for now we just note:

\begin{theorem} \label{nilpotent}    The number of orbits of $\GU(V) \times \GU(W)$
on $F$-linear maps $T$ from $V$ to $W$ such that $T^*T$ is nilpotent is a function $f(m,n)$ independent of $q$.
\end{theorem}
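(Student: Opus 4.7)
The plan is to pass to the algebraic closure $K$ of $F$ and then descend to $\FF_q$ via the Lang--Steinberg theorem. Let $\widetilde{G} = \GL_m(K) \times \GL_n(K)$ act on $\widetilde{X} = M_{m \times n}(K) \times M_{n \times m}(K)$ via $(X,Y) \cdot (A,B) = (XAY^{-1}, YBX^{-1})$. Define a Steinberg endomorphism $\sigma$ on $\widetilde{G}$ by $\sigma(X,Y) = ((X^*)^{-1}, (Y^*)^{-1})$ and on $\widetilde{X}$ by $\sigma(A,B) = (B^*, A^*)$, where $M \mapsto M^*$ is the conjugate--transpose with conjugation $x \mapsto x^q$. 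One checks that $\sigma$ is compatible with the action, that $\sigma^2$ is the $q^2$-Frobenius, that $\widetilde{G}^\sigma = \GU_m(q) \times \GU_n(q)$, and that $\widetilde{X}^\sigma$ is precisely the set of pairs $(T, T^*)$ for $F$-linear $T \colon V \to W$. Counting $\GU(V) \times \GU(W)$-orbits on such $T$ with $T^*T$ nilpotent therefore becomes counting $\widetilde{G}^\sigma$-orbits on $\widetilde{X}^\sigma$ inside the nilpotent locus $\NC = \{(A,B) : BA \text{ is nilpotent}\}$, which is both $\widetilde{G}$- and $\sigma$-stable.

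The first main step is to classify the $\widetilde{G}(K)$-orbits on $\NC$. A pair $(A,B)$ is a representation of the quiver $Q$ with two vertices and one arrow in each direction, and the nilpotency of $BA$ (equivalently of $AB$, up to a zero eigenvalue) restricts attention to the nilpotent representations of $Q$. Via Kronecker's theory of matrix pencils, or equivalently the explicit classification of indecomposable nilpotent representations of this quiver, every such representation decomposes uniquely into indecomposables drawn from a discrete combinatorial list. Summing over the decompositions of the dimension vector $(n,m)$ then gives a finite count $f(m,n)$ depending only on $m$ and $n$.

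The second main step is Lang--Steinberg descent. Since the combinatorial invariants labelling the indecomposables are preserved by $\sigma$, every $\widetilde{G}(K)$-orbit $\OC \subset \NC$ is $\sigma$-stable, and Lang--Steinberg gives $\OC^\sigma \neq \emptyset$. For any $z \in \OC^\sigma$, the $\widetilde{G}^\sigma$-orbits on $\OC^\sigma$ are parametrized by $H^1(\sigma, \Stab_{\widetilde{G}}(z))$. Writing the corresponding representation as $M \simeq \bigoplus M_i^{a_i}$ with pairwise non-isomorphic indecomposables (each having a local endomorphism ring), the stabilizer $\Aut(M)$ is an open subvariety of the affine space $\End(M)$, with reductive quotient $\prod \GL_{a_i}(K)$, hence connected. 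Therefore $H^1(\sigma, \Stab_{\widetilde{G}}(z))$ vanishes, each $\widetilde{G}(K)$-orbit contributes exactly one $\widetilde{G}^\sigma$-orbit, and the total count equals $f(m,n)$, independent of $q$.

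The main obstacle I anticipate is the first step: producing the field-independent classification of nilpotent representations of $Q$ and then confirming that each isomorphism class is individually $\sigma$-fixed. The latter is subtle because $\sigma$ interchanges the two arrows of $Q$ (via $A \leftrightarrow A^*$), so one must check that the indecomposable labels are symmetric under this swap; otherwise $\sigma$ could permute isomorphism classes and the descent would have to be carried out on $\sigma$-orbits of $\widetilde{G}(K)$-orbits. Once both this symmetry and the connectedness of stabilizers are in place, the theorem follows.
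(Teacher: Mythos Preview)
Your strategy is exactly the paper's: work with the $\GL_m(K)\times\GL_n(K)$-action on pairs $(A,B)$, classify nilpotent orbits via the Kronecker/quiver description, note that stabilizers are connected (as unit groups of the endomorphism algebras), and descend via Lang--Steinberg. The obstacle you flag at the end, however, is real, and your provisional assertion that every $\widetilde G(K)$-orbit in $\NC$ is $\sigma$-stable is false. For each $d\ge 1$ there are two indecomposable nilpotent representations with dimension vector $(d,d)$, namely $J[d]=(I,J_d)$ and $J'[d]=(J_d,I)$ with $J_d$ a single nilpotent Jordan block; your map $\sigma\colon(A,B)\mapsto(B^*,A^*)$ interchanges them. (The indecomposables $R[d],R'[d]$ with dimension vectors $(d,d\pm 1)$ are fixed, being determined by their dimension vectors alone.) Consequently a nilpotent orbit is $\sigma$-stable precisely when the multiplicities of $J[d]$ and $J'[d]$ agree for every $d$, so your $f(m,n)$ from the first step (the total number of nilpotent $\widetilde G(K)$-orbits) is \emph{not} the $f(m,n)$ of the theorem.

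The repair is exactly the alternative you sketched: count only the $\sigma$-stable $\widetilde G(K)$-orbits. This count is still independent of $q$, because the permutation that $\sigma$ induces on the set of nilpotent orbits does not depend on $q$: every such orbit has a representative over the prime field, so the Frobenius factor acts trivially on the orbit labels, and the residual involution $(A,B)\mapsto(B^{\top},A^{\top})$ acts on those labels by the fixed swap $J[d]\leftrightarrow J'[d]$. Your connected-stabilizer argument then shows each $\sigma$-stable orbit contributes exactly one $\GU_m(q)\times\GU_n(q)$-orbit, and the theorem follows. This is precisely how the paper proceeds in Section~\ref{desc:orbits}, culminating in the explicit formula of Corollary~\ref{nilpotent orbits}.
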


We give a formula $f(m,n)$ in Section \ref{desc:orbits}.   It is easy to see that  
we have $f(m,d)=f(m,2m)$ for any $d \ge 2m$ (and clearly $f(m,n)$ does not decrease
as $n$ increases).   Also, it follows that all such orbits have representatives
over the prime field.

Combining the two previous results gives:

\begin{cor} \label{count}    Assume that $m \le n$.
The number of orbits of $\GU_m(q) \times GU_n(q)$ is
$$
\sum_{i=0}^m     f(m-i, n-i) k(\GL_i(q)).
$$
\end{cor}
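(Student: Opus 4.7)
The plan is to assemble the counting formula directly from the three preceding theorems: use Theorem~\ref{reduce1} to canonically split any $T:V\to W$ into an invertible piece and a nilpotent piece, apply Theorem~\ref{invertible} to count orbits of the invertible piece, and Theorem~\ref{nilpotent} to count orbits of the nilpotent piece. Summing over the common dimension $i$ of the invertible piece produces the stated formula.

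In more detail: fix $V$ and $W$ with nondegenerate Hermitian forms of dimensions $n$ and $m$ respectively, and let $T:V\to W$ be $F$-linear. By Theorem~\ref{reduce1}, $T$ determines canonical orthogonal decompositions $V=V_0\perp V_1$ and $W=W_0\perp W_1$ with $\dim V_0=\dim W_0=:i$, together with a $T$-stable splitting $T=T_0\oplus T_1$ where $T_0:V_0\to W_0$ is a bijection and $T_1:V_1\to W_1$ has $T_1^*T_1$ nilpotent. Since $\dim W_0\le\dim W=m$ and $\dim V_0\le\dim V=n$, and we assume $m\le n$, the parameter $i$ ranges over $0,1,\dots,m$. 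First I would observe that the canonicity of the decomposition means that any $(X,Y)\in\GU(W)\times\GU(V)$ carrying $T$ to another transformation $T'$ must also carry the decomposition of $T$ to that of $T'$; in particular it induces the same value of $i$ and carries $T_0,T_1$ to $T_0',T_1'$ via unitary isomorphisms of the corresponding subspaces.

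Next I would invoke Witt's theorem for Hermitian spaces over $F$: any two nondegenerate subspaces of $V$ (respectively $W$) of the same dimension are in the same $\GU(V)$ (respectively $\GU(W)$) orbit. Therefore, for each $i$, we may fix once and for all a pair of nondegenerate subspaces of dimensions $i$ and $m-i$ in $W$ (and similarly $i$ and $n-i$ in $V$), and represent every $T$ with parameter $i$ by one whose $V_0,W_0,V_1,W_1$ are these fixed subspaces. Two such representatives lie in the same $\GU(W)\times\GU(V)$ orbit if and only if their invertible parts lie in the same $\GU(W_0)\times\GU(V_0)=\GU_i(q)\times\GU_i(q)$ orbit and their nilpotent parts lie in the same $\GU(W_1)\times\GU(V_1)=\GU_{m-i}(q)\times\GU_{n-i}(q)$ orbit; this uses both directions of the Witt transport argument together with the canonicity established in the previous step.

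Finally I would count: by Theorem~\ref{invertible} the number of $\GU_i(q)\times\GU_i(q)$ orbits on invertible transformations $V_0\to W_0$ is $k(\GL_i(q))$, and by Theorem~\ref{nilpotent} the number of $\GU_{m-i}(q)\times\GU_{n-i}(q)$ orbits on transformations $V_1\to W_1$ with nilpotent $T_1^*T_1$ is $f(m-i,n-i)$. Summing the product over $i=0,\dots,m$ gives the formula. The main obstacle I anticipate is the Witt-transport step, where one must verify rigorously that conjugating into a fixed pair of subspaces does not over- or under-count orbits; this hinges on checking that the stabilizer inside $\GU(W)\times\GU(V)$ of the chosen fixed subspaces surjects onto $\GU(W_0)\times\GU(W_1)\times\GU(V_0)\times\GU(V_1)$, which is immediate from the orthogonal direct product structure of unitary groups on a perpendicular decomposition.
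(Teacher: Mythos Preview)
Your proposal is correct and follows exactly the approach the paper intends: the paper simply writes ``Combining the two previous results gives'' and states the corollary, relying implicitly on Theorem~\ref{reduce1} to split into invertible and nilpotent pieces, then counting each with Theorems~\ref{invertible} and~\ref{nilpotent}. You have spelled out the Witt-transport and stabilizer details that the paper leaves to the reader, but the underlying argument is the same.
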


Note that   $k(GL_i(q))$ is  very close   to $q^i$ (but less than $q^i$)  and in particular is
a monic polynomial in $q$ of degree $i$ \cite{MR}.   Thus, the total number of orbits is a monic
polynomial in $q$ of degree $m$.  

Next, we note an analog of Flanders' theorem (see below for the statement).
If $N$ is a nilpotent square matrix of size $n$,  we let
$\mu(N) = \mu_1 \ge \ldots \ge \mu_r$ be the partition of $n$ associated to the
Jordan form of $N$  (for convenience of notation we allow the possibility of $\mu_j = 0$).

The theorem of Flanders  \cite{Fl}
(see also \cite{Jo}) is:

\begin{theorem} {\rm [Flanders]}  \label{AB versus BA} Let $L$ be a field and $m, n$ positive
integers.    Let $\lambda$ be a partition of $m$ and $\mu$ a partition of $n$.
There exist $A \in M_{m \times n}(L)$ and $B \in M_{n \times m}(L)$ with $AB$ nilpotent
such that 
$\mu(AB) = \lambda$ and $\mu(BA)= \mu$ if and only if $|\lambda_i - \mu_i| \le 1$
for all $i$.
\end{theorem}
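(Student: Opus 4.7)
The plan is to study $A$ and $B$ simultaneously via the odd block operator
\[
T \;=\; \begin{pmatrix} 0 & A \\ B & 0 \end{pmatrix}
\]
on $V \oplus W$, where $V = L^m$ and $W = L^n$. Since $T^2 = \diag(AB,\,BA)$, the nilpotence hypothesis is equivalent to nilpotence of $T$, and the Jordan types of $T^2|_V$ and $T^2|_W$ are exactly $\lambda$ and $\mu$, respectively.

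For the necessity direction, I would first produce a Jordan basis of $T$ consisting of homogeneous vectors. The kernel filtration $\ker T^k$ respects the $\ZZ/2$-grading, because each $T^k$ is itself either even or odd on $V \oplus W$; so starting from the largest block size $\ell$ and working downward, one can pick homogeneous lifts of a basis of $\ker T^\ell / \ker T^{\ell-1}$, and each such lift generates a graded cyclic $T$-invariant subspace with basis $e, Te, T^2e, \ldots, T^{\ell-1}e$ alternating between $V$ and $W$. For a cyclic block of size $\ell$ with generator in $V$ (resp.\ in $W$), the restriction of $T^2$ to its $V$-part is a single Jordan block of size $\lceil \ell/2 \rceil$ (resp.\ $\lfloor \ell/2 \rfloor$), and on its $W$-part a single block of the complementary size; in either case the pair $(a_j, b_j)$ of parts this block contributes to $(\lambda, \mu)$ satisfies $|a_j - b_j| \le 1$. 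To finish, I would invoke the rearrangement lemma: if pairs $(a_j, b_j)$ all satisfy $|a_j - b_j| \le 1$ and one sorts both sequences independently into partitions $\lambda$ and $\mu$, then $|\lambda_i - \mu_i| \le 1$ for every $i$; indeed, the $i$ largest $a_j$'s each have partner $b_j \ge a_j - 1 \ge \lambda_i - 1$, so at least $i$ of the $b_j$ are $\ge \lambda_i - 1$, forcing $\mu_i \ge \lambda_i - 1$, and symmetrically.

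For sufficiency, given $\lambda, \mu$ with $|\lambda_i - \mu_i| \le 1$, I would construct $A$ and $B$ block-diagonally, using one pair $(A_i, B_i) \in M_{\lambda_i \times \mu_i}(L) \times M_{\mu_i \times \lambda_i}(L)$ per index $i$ so that $A_iB_i$ is a single nilpotent Jordan block of size $\lambda_i$ and $B_iA_i$ is a single nilpotent Jordan block of size $\mu_i$. Three elementary cases cover everything: when $\lambda_i = \mu_i$, take $A_i = I$ and $B_i$ the standard nilpotent shift; when $\lambda_i = \mu_i + 1$, take $A_i = \bigl(\begin{smallmatrix} I_{\mu_i} \\ 0 \end{smallmatrix}\bigr)$ and $B_i = (\,J \mid e_{\mu_i}\,)$ with $J$ the standard nilpotent Jordan block on $L^{\mu_i}$ and $e_{\mu_i}$ its last standard basis vector, a direct computation with the rank-drop sequence then confirming that $A_iB_i$ is a single Jordan block of size $\lambda_i$; the case $\mu_i = \lambda_i + 1$ is symmetric (and the degenerate $\{\lambda_i, \mu_i\} = \{1, 0\}$ case is trivial).

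The main obstacle is the graded Jordan decomposition used in the necessity step, namely proving that a nilpotent endomorphism that is odd with respect to a $\ZZ/2$-grading admits a Jordan basis of homogeneous elements. Formally this is the classification of finitely generated graded $L[X]$-modules with $X$ placed in odd degree, a graded analogue of the elementary-divisors theorem; the only delicate point is exhibiting, at each stage of the inductive peeling, a graded $T$-invariant complement to the cyclic subspaces extracted so far. Once this structural fact is in hand, the rest of the proof reduces to the rearrangement lemma and the explicit block construction above.
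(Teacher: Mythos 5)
Your proposal is correct in outline, but the route is genuinely different from the paper's. The paper ``proves'' Flanders' theorem with a one-line remark: it follows from the classification of representations of the back-and-forth quiver, which the paper in turn derives from Kronecker's pencil classification together with the orbit-counting argument (Lemma~\ref{counting}) that converts the parallel-arrow quiver to the opposite-arrow one. You bypass Kronecker entirely: by packaging $(A,B)$ as an odd nilpotent operator $T$ on the $\ZZ/2$-graded space $V\oplus W$, you reduce to a graded Jordan decomposition, which is precisely the nilpotent part of the quiver classification but obtained directly. This is more elementary (no pencil theory, no counting argument, no passage through the invertible part), and your explicit block constructions for sufficiency and the rearrangement lemma for necessity are both correct as written --- I checked that for $\lambda_i=\mu_i+1$ your $A_iB_i=\bigl(\begin{smallmatrix}J&e_{\mu_i}\\0&0\end{smallmatrix}\bigr)$ has rank $\mu_i$ and hence is a single block, and that the sorting argument $\mu_i\ge\lambda_i-1$ goes through.

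The one step you flagged as delicate --- producing a graded $T$-invariant complement to a cyclic subspace --- is real but fillable by the standard dual-functional trick, done gradedly: pick a homogeneous $e$ of maximal height $\ell$ and a \emph{homogeneous} functional $\phi\in(V\oplus W)^*$ with $\phi(T^{\ell-1}e)\ne 0$ (possible since $T^{\ell-1}e$ is homogeneous). Set $C=\Span(e,Te,\dots,T^{\ell-1}e)$ and $C'=\bigcap_{i=0}^{\ell-1}\ker(\phi T^i)$. Each $\phi T^i$ is homogeneous, so $C'$ is a graded subspace; $C'$ is $T$-invariant because $T^\ell=0$; a short computation gives $C\cap C'=0$ and a dimension count gives $C\oplus C'=V\oplus W$. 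Inducting on $C'$ yields the homogeneous Jordan basis. With that lemma supplied, your proof is complete.
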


Note that this result follows immediately from the classification of the representations
of the appropriate quiver (and in particular from Kronecker's theorem).  

We consider the same problem for   $A^*A$ and $AA^*$.  We note that not every possible
pair of partitions allowed by Flanders' Theorem can occur.  For example, if $d > 1$ and
$A$ is a $d \times d$ matrix over a finite field, then it cannot be the case that both
$AA^*$ and $A^*A$ are nilpotent matrices with a single Jordan block.
    In order to state the  result, we need to introduce some notation.

Let $\alpha$ be a partition of $m$ and $\beta$ a partition on $n$.  Let $\gamma
=\gamma(\alpha, \beta)$
be the partition where parts are the $\alpha_i = \beta_i > 0$.   If
$\lambda = \lambda_1 \ge \ldots \ge \lambda_r > 0$,  a component of $\lambda$
is a maximal consecutive subsequence of the parts of $\lambda$ such that
the difference of any two consecutive pieces is at most $1$.

\begin{theorem} \label{A*A versus AA*}   Let $\alpha$ be a partition of $m$
and $\beta$ a partition of $n$.       There exists
$A \in M_{m \times n}(q^2)$ so that $AA^*$ is nilpotent and
$\mu(A^*A)=\beta$ and $\mu(AA^*) = \alpha$
if and only if
\begin{enumerate}
\item $|\alpha_i - \beta_i| \le 1$ for all $i$; and
\item Every connected component of $\gamma(\alpha, \beta)$ not involving $1$ has even length.
\end{enumerate}
\end{theorem}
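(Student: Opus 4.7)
The plan is to build on the machinery set up earlier in the paper: first classify the $\GL_m(K) \times \GL_n(K)$-orbits on pairs $(A,B) \in M_{m\times n}(K) \times M_{n \times m}(K)$ with $BA$ nilpotent via Kronecker's pencil theory, then descend to $\GU_m(q) \times \GU_n(q)$-orbits on maps $T: V \to W$ (with $T^*T$ nilpotent, and the relevant pair being $(T, T^*)$) by Lang--Steinberg. The question becomes: for which $(\alpha,\beta)$ does the Kronecker decomposition of such a pair admit a Hermitian-compatible (equivalently, Frobenius-fixed) representative?

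Necessity of (1) is immediate from Flanders' Theorem \ref{AB versus BA} applied to $B = A^*$. For (2), I would pass to $K$ and decompose the pair $(A, A^*)$ into Kronecker indecomposables. In the nilpotent case these fall into three families: ``diagonal'' indecomposables $E_k$ with dimension vector $(k,k)$ contributing the value $k$ to both partitions; and ``off-diagonal'' indecomposables of dimension vectors $(k+1,k)$ and $(k,k+1)$, each contributing a value $k+1$ to one partition and $k$ to the other. The Hermitian involution swaps $A$ with $A^*$, which at the level of indecomposables interchanges the two off-diagonal types. Hence the off-diagonal summands automatically pair up under the involution, and they contribute only to indices where $\alpha_i \ne \beta_i$---already governed by (1). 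The remaining constraint must therefore come from analyzing which configurations of $E$-blocks (whose sizes are exactly the parts of $\gamma(\alpha,\beta)$) can support a Hermitian form compatible with the decomposition.

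The key combinatorial fact I would establish is: a single $E_k$ admits a Hermitian realization precisely when $k = 1$ (take $V = W = F$, $A = 0$, with nondegenerate Hermitian forms); for $k \ge 2$ no such realization exists. This I would prove by an elementary argument extending the $2 \times 2$ observation that a Hermitian matrix with a repeated eigenvalue must be scalar, and hence cannot be a single nontrivial Jordan block. On the other hand, any pair $E_k \oplus E_k$ or $E_k \oplus E_{k+1}$ admits a compatible Hermitian form, constructed as the ``hyperbolic'' pairing of an indecomposable with its Hermitian dual. Combinatorially this corresponds to the statement that a connected component of $\gamma$ not containing a $1$ must be tileable by such pairs, and hence must have even length, while a component containing a $1$ has no parity constraint since an unpaired $E_1$ at its bottom always admits its own self-dual Hermitian model.

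For sufficiency I would exhibit $A$ explicitly as an orthogonal direct sum of these pieces over $F$: one ``hyperbolic'' off-diagonal block for each index with $\alpha_i \ne \beta_i$, isolated $E_1$ blocks for any leftover $1$'s in $\gamma$, and paired $E$-blocks tiling each remaining component. Each block admits a small explicit matrix model over $F$. The main obstacle will be the Hermitian analysis of the paired $E$-blocks---verifying that both $E_k \oplus E_k$ and $E_k \oplus E_{k+1}$ genuinely support a nondegenerate Hermitian pair, and that the descent from $K$ to $F$ admits no further obstruction. For the latter, connectedness of the relevant stabilizers, which can be read off from the quiver representation theory, ensures that Lang--Steinberg lifts each Hermitian-compatible $K$-orbit to a single $F$-orbit without extra conditions.
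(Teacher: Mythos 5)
Your overall strategy matches the paper's: decompose $(A,A^*)$ into Kronecker indecomposables over $K$, determine which orbits are fixed by the twisted Frobenius $\rho=\sigma\tau$, descend via Lang--Steinberg using connectedness of stabilizers, and then read off the combinatorial conditions. However, there are two substantive errors at the heart of your analysis that the paper's proof avoids.

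First, you have the action of the involution $\tau$ on the indecomposables backwards. Over $K$, the nilpotent indecomposables come in \emph{four} families, not three: $J[d]=(I,J_d)$ and $J'[d]=(J_d,I)$, both with dimension vector $(d,d)$, and the off-diagonal $R[d],R'[d]$ with dimension vectors $(d+1,d)$ and $(d,d+1)$. Since the off-diagonal indecomposables are uniquely determined by their dimension vectors, their $G$-orbits are $\tau$-invariant; it is the two \emph{diagonal} types $J[d]$ and $J'[d]$ that are interchanged by $\tau$ (as $\tau(I,J_d)=(J_d^{\top},I)$, which is $G$-equivalent to $(J_d,I)$). You assert the opposite (``interchanges the two off-diagonal types'') and you collapse $J[d]$ and $J'[d]$ into a single type $E_k$, so the $\tau$-stability analysis — which is exactly where condition (2) comes from — is not carried out correctly. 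The constraint is that $J[d]$ and $J'[d]$ must appear with equal multiplicity, while each $R[d]$ and $R'[d]$ can appear freely; the paper then runs a short induction on the number of summands, peeling off $R[\alpha_1]$ when $\alpha_1>\beta_1$, and peeling off $J[\alpha_1]\oplus J'[\alpha_1]$ or $R[\alpha_1]\oplus R'[\alpha_1]$ when $\alpha_1=\beta_1>1$, which is what produces the parity condition on $\gamma$.

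Second, your proposed ``elementary argument extending the $2\times 2$ observation that a Hermitian matrix with a repeated eigenvalue must be scalar, and hence cannot be a single nontrivial Jordan block'' is false over $\mathbb{F}_{q^2}$. By Lemma \ref{hermitian1}, every matrix similar to a matrix over $\mathbb{F}_q$ — in particular every nilpotent Jordan block — \emph{is} similar to a Hermitian matrix; Hermitian matrices over finite fields are not semisimple in general. So the obstruction to realizing a single $J[d]$ as $(A,A^*)$ cannot be deduced from the non-existence of non-diagonalizable Hermitian matrices. The actual obstruction is precisely the $\tau$-swap described above (or, if you want a hands-on argument: if $AA^*=A^*A=J_d$ then both $A$ and $A^*$ commute with $J_d$, forcing $A$ and $A^*$ to be polynomials in $J_d$ without constant term, whence both are strictly upper triangular, contradicting $A^*=\bar{A}^{\top}$ unless $A=0$). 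Relatedly, the ``$E_1$ is self-dual'' base case is not a single $J[1]$: a $1\times 1$ zero matrix $A$ gives $(A,A^*)=(0,0)$, which as a $\dim(1,1)$ representation is $R[0]\oplus R'[0]$, \emph{not} $J[1]$; $J[1]$ and $J'[1]$ are still swapped by $\tau$ and neither arises alone.

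The correct version of your ``tiling'' picture is there to be had, and your statement of what needs to be verified (connectedness of stabilizers, descent, explicit realizability of the paired blocks) is on target, but as written the central combinatorial step rests on a misidentification of the $\tau$-orbit structure and on a fact about Hermitian matrices that is specific to characteristic $0$.
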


If $A$ is a matrix over a finite field $F_0$, let $F$ be a quadratic field extension.
Then viewing $A$ as a matrix over $F$, we have $A^{\top} = A^*$.   Thus, the
previous result applies to $A^{\top}A$ as well (over finite fields).  By standard
arguments, 
this gives the following result over an arbitrary field. 

\begin{corollary}\label{ATA versus AAT}   Let $K$ be a field. 
Let $\alpha$ be a partition of $m$
and $\beta$ a partition of $n$.       If there exists 
$A \in M_{m \times n}(K)$ so that  $AA^{\top}$ is nilpotent 
and $\mu(A^{\top}A)=\beta$ and $\mu(AA^{\top}) = \alpha$, then
\begin{enumerate}
\item $|\alpha_i - \beta_i| \le 1$ for all $i$; and
\item Every connected component of $\gamma(\alpha, \beta)$ not involving $1$ has even length.
\end{enumerate}
\end{corollary}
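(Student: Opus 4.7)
The plan is to deduce the corollary from Theorem~\ref{A*A versus AA*} by a standard specialization argument, reducing from an arbitrary field to a finite field.

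First, I would observe that the hypothesis on $A$---namely that $\mu(A^{\top}A)=\beta$, $\mu(AA^{\top})=\alpha$, and $AA^{\top}$ is nilpotent---can be encoded as finitely many polynomial conditions on the entries of $A$ with integer coefficients. The Jordan partition of a nilpotent matrix $N$ is determined by the sequence of ranks $\rank(N^{k})$, and prescribing a rank value $r_{k}$ amounts to saying that all $(r_{k}+1)\times (r_{k}+1)$ minors of $N^{k}$ vanish and that at least one $r_{k}\times r_{k}$ minor is nonzero. Taking $h$ to be the product of all such witnessing non-vanishing minors (for the two matrices $AA^{\top}$ and $A^{\top}A$ and all relevant powers), the hypothesis packages into
\[
g_{1}(A)=\cdots=g_{s}(A)=0,\qquad h(A)\neq 0,
\]
for certain $g_{i},h\in\ZZ[x_{11},\dots,x_{mn}]$; nilpotency of $AA^{\top}$ is included among the $g_{i}=0$ relations.

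Next, let $R\subset K$ be the $\ZZ$-subalgebra generated by the entries of $A$. Then $R$ is a finitely generated integral $\ZZ$-algebra with $h(A)\neq 0$, so the localization $R':=R[h(A)^{-1}]$ is nonzero and still finitely generated over $\ZZ$. Because $\ZZ$ is a Jacobson ring, every maximal ideal $\mathfrak{m}$ of $R'$ has finite residue field $F_{0}:=R'/\mathfrak{m}=\FF_{q}$ for some prime power $q$. The reduction $\tilde A\in M_{m\times n}(F_{0})$ of $A$ modulo $\mathfrak{m}$ still satisfies $g_{i}(\tilde A)=0$ for all $i$ and $h(\tilde A)\neq 0$ (the latter because $h(A)$ has been inverted), so $\tilde A\tilde A^{\top}$ is nilpotent and the Jordan types of $\tilde A^{\top}\tilde A$ and $\tilde A\tilde A^{\top}$ are exactly $\beta$ and $\alpha$.

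Finally, I would embed $F_{0}\hookrightarrow F=\FF_{q^{2}}$ and regard $\tilde A$ as a matrix over $F$. Since every entry of $\tilde A$ is fixed by the $q$-th power Frobenius, the Hermitian adjoint coincides with the transpose, $\tilde A^{*}=\tilde A^{\top}$. Theorem~\ref{A*A versus AA*} now applies directly to $\tilde A$ and yields conditions~(1) and~(2). The main technical step I expect to be careful about is the first one: confirming that the exact-Jordan-type hypothesis can be phrased as a conjunction of equalities $g_{i}=0$ together with the non-vanishing of a single polynomial $h$ with integer coefficients. Once that packaging is in place, the Nullstellensatz for finitely generated $\ZZ$-algebras and the preservation of the relevant minors under inverting $h$ make the specialization and descent to a finite subfield entirely formal.
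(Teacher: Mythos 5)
Your proof is correct and takes essentially the same route as the paper: reduce to the finite-field case, view that finite field inside its quadratic extension so that $A^{\top}$ coincides with the Hermitian adjoint $A^{*}$, and invoke Theorem~\ref{A*A versus AA*}. The paper dispatches the descent from an arbitrary field with the one-line remark ``by a usual compactness argument (quite elementary in this case)''; your specialization argument---encoding the Jordan types as vanishing and nonvanishing of minors of powers of $AA^{\top}$ and $A^{\top}A$, passing to the finitely generated $\ZZ$-subalgebra localized at the product of witnessing minors, and reducing modulo a maximal ideal with finite residue field---is a clean and explicit realization of exactly that step.
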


Over an arbitrary field, given two partitions satisfying the two conditions
above, one cannot in general find an $A$ with $A^{\top}A$ nilpotent 
and $\alpha(A^{\top}A)$ and $\beta(AA^{\top})$ as specified (e.g. over $\RR$
but also over any field where $-1$ is not a square).   It is not hard to show
that if $-1$ is a square in $F$, then the converse does hold and in particular
holds over an algebraically closed field.

The paper is organized as follows.  We first recall some basic results
about Hermitian matrices over a finite fields.   In the following section, we
recall some general results about quivers with particular attention to 
the quiver associated to Kronecker's theory.   We then recall some basic results
about algebraic groups and passing between finite groups and algebraic groups. 
In the next section, we prove the main results and give a formula for the number
of orbits.  In the final section, we make some further remarks.  

\section{Hermitian Matrices over Finite Fields}. \label{hermitian}

We recall some easy facts about Hermitian matrices over finite fields.
  See \cite[Lemma 3.1]{FG}.   We give a quick proof. 

\begin{lemma} \label{hermitian1}  Let $A \in M_n(q^2)$.   Then the following are equivalent.
\begin{enumerate}
\item $A$ is similar to a Hermitian matrix.
\item  $A$ is similar to a matrix in $M_n(q)$.
\end{enumerate}
Moreover, if $A$ and $B$ are Hermitian matrices, they are similar as 
matrices if and if only if they are conjugate by an element of $\GU_n(q)$.
\end{lemma}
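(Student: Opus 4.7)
The plan is to treat the two implications and the moreover clause separately. For $(1)\Rightarrow(2)$, I would use that a Hermitian $H$ satisfies $\overline{H}=H^{\top}$ together with the fact that every square matrix is similar to its transpose. Hence $H$ and $\overline{H}$ are similar, so the invariant factors of $H$, viewed as polynomials in $\FQA[x]$, are fixed by the Galois involution $x\mapsto x^{q}$ and therefore lie in $\FQ[x]$. The rational canonical form of $H$ is then a matrix in $M_{n}(q)$, and $A\sim H$ is similar to that matrix. For $(2)\Rightarrow(1)$, I would invoke the classical Frobenius--Taussky theorem that over any field every square matrix is similar to a symmetric one (proved by an explicit Hankel-type conjugation of each companion block). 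Since the Galois involution acts trivially on $\FQ$, a symmetric matrix in $M_{n}(q)$ is automatically Hermitian when regarded in $M_{n}(q^{2})$, which gives the claim.

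For the moreover clause, I would apply the Lang--Steinberg theorem. Let $G=\GL_{n}(K)$ with the Steinberg endomorphism $F(g)=(g^{*})^{-1}$, whose fixed-point subgroup is $\GU_{n}(q)$. Given Hermitian $A,B$ with $A=gBg^{-1}$ for some $g\in G$, applying $*$ to both sides and using $A^{*}=A$ and $B^{*}=B$ yields $g^{*}gB=Bg^{*}g$, so $c:=g^{*}g$ lies in $C:=C_{G}(B)$. The centralizer $C$ is connected, since it is a Zariski-open subset of the irreducible affine variety $\{X\in M_{n}(K):XB=BX\}$, and it is $F$-stable because $F(B)=B^{*}=B$. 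Applying Lang--Steinberg to $C$ with the restriction of $F$ (a Steinberg endomorphism of a connected group), I can write $c=h^{*}h$ for some $h\in C$. Setting $u:=gh^{-1}$, a direct computation gives $u^{*}u=(h^{*})^{-1}(g^{*}g)h^{-1}=(h^{*})^{-1}(h^{*}h)h^{-1}=I$, so $u\in\GU_{n}(q)$; and $uBu^{-1}=gh^{-1}Bhg^{-1}=gBg^{-1}=A$ because $h\in C$.

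The converse implication in the moreover clause is immediate. The main subtlety lies in the Lang--Steinberg step: one must verify that the restriction of $F$ to $C_{G}(B)$ is still a Steinberg endomorphism of a connected group, and apply the surjectivity of the Lang map in the form producing $c=h^{*}h$ with $h$ in the centralizer rather than merely in $G$. The other ingredients---the behaviour of invariant factors under the Galois action, and the Frobenius--Taussky theorem on symmetric similarity---are essentially standard.
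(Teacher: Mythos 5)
Your treatment of $(1)\Rightarrow(2)$ is essentially the paper's argument, and your Lang--Steinberg proof of the \emph{moreover} clause is correct and makes explicit the argument the paper just cites from \cite{FG} (including the right observations: $g^*g$ lies in the centralizer, the centralizer is connected because it is an open subset of the centralizer algebra, and it is $F$-stable). However, your argument for $(2)\Rightarrow(1)$ rests on a false theorem.

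The claim that ``over any field every square matrix is similar to a symmetric one'' is not true, and it fails even over finite fields, so the intended application is broken. Over $\FF_3$, the $2\times 2$ nilpotent Jordan block $J=\left(\begin{smallmatrix}0&1\\0&0\end{smallmatrix}\right)$ is not similar to any symmetric matrix in $M_2(\FF_3)$: a nonzero symmetric nilpotent $2\times 2$ matrix $\left(\begin{smallmatrix}a&b\\b&-a\end{smallmatrix}\right)$ would require $a^2+b^2=0$ with $(a,b)\neq(0,0)$, forcing $-1$ to be a square in $\FF_3$. (The same obstruction makes the claim false over $\RR$, where symmetric matrices have real eigenvalues.) What is true --- and what the paper actually uses from Taussky--Zassenhaus \cite{TZ} --- is the weaker statement that for $B\in M_n(q)$ there is an invertible \emph{symmetric intertwiner} $S\in\GL_n(q)$ with $SB=B^{\top}S$. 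This does not make $B$ similar over $\FF_q$ to a symmetric matrix unless $S$ is congruent over $\FF_q$ to the identity, and over $\FF_q$ there are two congruence classes of nondegenerate symmetric forms, so this can fail. The paper's way around this is the crucial extra step you skip: interpret $S$ as the Gram matrix of a nondegenerate \emph{Hermitian} form on $\FQA^n$ (this works because $\overline{S}=S$ and $S^{\top}=S$), note that $SB=B^{\top}S=B^*S$ says $B$ is self-adjoint for that form, and then use that \emph{all} nondegenerate Hermitian forms over $\FQA$ are equivalent to conclude $B$ is conjugate over $\FQA$ to a matrix Hermitian in the standard sense. The passage to the quadratic extension and to Hermitian (rather than symmetric bilinear) forms is exactly what removes the congruence-class obstruction, and it cannot be replaced by an appeal to similarity to a symmetric matrix over $\FF_q$.
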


\begin{proof}   If $A$ is similar to a Hermitian matrix, then it is similar to its
conjugate via the $q$-Frobenius map (because any matrix is similar to its
transpose).   Thus, all the invariant factors of $A$ are defined over $\FF_q$,
whence $A$ is similar to a matrix in $M_n(q)$.   

Suppose that $A$ is similar to $B \in M_n(q)$.     Then there exists a symmetric
matrix $S \in GL_n(q)$ with $SB=B^{\top}S=B^*S$ \cite{TZ}.   Thus, $B$ is self adjoint
with respect to the Hermitian form induced by $S$, whence $B$ is conjugate to
a Hermitian matrix.

The last statement follows by the Lang-Steinberg theorem (see below) and is proved
in \cite[Lemma 3.1]{FG}.  
\end{proof}

The next result follows from the   the primary decomposition for a matrix
and the previous result that any two Hermitian matrices which have the same rational
canonical form are conjugate via an element of the unitary group.

\begin{cor}  Let $V$ be a vector space over the field $F$ of $q^2$ elements
equipped with a nondegenerate Hermitian form.  Let $F_0$ be the subfield
of size $q$.   Let $T$ be a Hermitian linear
transformation on $V$ (i.e. self adjoint with respect to the Hermitian form).  
Then there is a unique decomposition $V=V_1 \perp V_2  \perp \ldots \perp V_r$ where
$T(V_i)=V_i$ and the characteristic polynomial of $T$ on $V_i$ is a power of
the monic irreducible polynomial $f_i(x) \in F_0[x]$ with $f_i(x) \ne f_j(x)$ for
$i \ne j$.  In particular, there is a unique decompostion $V= X \perp Y$
with $TX=X$ and $TY \subset Y$ with $T$ nilpotent on $Y$.  
\end{cor}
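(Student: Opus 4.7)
The plan is to combine the primary decomposition of $V$ as an $F[T]$-module with the crucial observation that because $T$ is self-adjoint, the irreducible factors of its minimal polynomial can be taken to lie in $F_0[x]$, and polynomials with coefficients in $F_0$ preserve self-adjointness and hence orthogonality.

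First I would invoke Lemma \ref{hermitian1}: since $T$ is Hermitian, $T$ is similar (over $F$) to a matrix in $M_n(q)$, so the minimal polynomial of $T$ lies in $F_0[x]$. Factor it as $\prod_{i=1}^{r} f_i(x)^{a_i}$ with the $f_i(x) \in F_0[x]$ distinct monic irreducibles, and set $V_i := \ker(f_i(T)^{a_i})$. The standard primary decomposition (applied inside $F[x]$, where the $f_i^{a_i}$ remain pairwise coprime) gives $V = V_1 \oplus \cdots \oplus V_r$, $T(V_i) = V_i$, and the characteristic polynomial of $T|_{V_i}$ is a power of $f_i$.

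The main step is orthogonality of the $V_i$. The key observation is that for any polynomial $p(x) \in F_0[x]$, the operator $p(T)$ is self-adjoint with respect to the Hermitian form: if $p(x) = \sum c_k x^k$ with $c_k \in F_0$, then $\bar{c}_k = c_k$ and $(p(T)v, w) = \sum c_k(T^k v, w) = \sum c_k(v, T^k w) = (v, p(T) w)$. Given $i \ne j$, by coprimality in $F_0[x]$ there exist $g, h \in F_0[x]$ with $g\, f_i^{a_i} + h\, f_j^{a_j} = 1$. For $v \in V_i$ and $w \in V_j$, write
\[
(v, w) = (v, g(T) f_i(T)^{a_i} w) + (v, h(T) f_j(T)^{a_j} w).
\]
The second summand vanishes because $f_j(T)^{a_j} w = 0$; the first equals $(g(T) f_i(T)^{a_i} v, w) = 0$ using self-adjointness of $g(T)$ and $f_i(T)^{a_i}$ together with $f_i(T)^{a_i} v = 0$. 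Hence $V_i \perp V_j$, and restriction of the nondegenerate form to each $V_i$ is automatically nondegenerate.

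For uniqueness, I would note that the $V_i$ are characterised intrinsically as the primary components of $V$ as an $F[T]$-module, so any decomposition with the stated properties must coincide with this one. Finally, for the $V = X \perp Y$ statement, group the summands: let $Y = V_k$ when some $f_k(x) = x$ (otherwise $Y = 0$) and $X = \bigoplus_{i \ne k} V_i$. Then $T|_Y$ is nilpotent by construction, while for $i \ne k$ the polynomial $f_i$ has nonzero constant term, so $f_i(T)^{a_i} = 0$ on $V_i$ forces $T|_{V_i}$ to be invertible, hence $TX = X$. Uniqueness of $(X,Y)$ follows from the uniqueness of the primary decomposition. The only subtle point, and what I regard as the crux, is the orthogonality step: it hinges on the coefficients of the $f_i$ lying in $F_0$ (which is exactly what Lemma \ref{hermitian1} secures), so that the Bezout identity can be written with $F_0$-coefficients and produce self-adjoint operators.
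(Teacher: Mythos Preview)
Your proof is correct. The paper does not spell out a proof; it just records that the corollary ``follows from the primary decomposition for a matrix and the previous result that any two Hermitian matrices which have the same rational canonical form are conjugate via an element of the unitary group.'' The intended argument, then, is to first build a model: write the rational canonical form of $T$ over $F_0$ as a block-diagonal matrix with primary blocks, equip each block with a Hermitian form making it self-adjoint (possible by Lemma~\ref{hermitian1}), and take the orthogonal direct sum. This model is Hermitian with visibly orthogonal primary components and has the same rational canonical form as $T$, so by the ``moreover'' clause of Lemma~\ref{hermitian1} there is a unitary carrying one to the other, transporting the orthogonal decomposition back to $V$.

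Your route is genuinely different at the key step. You never invoke the unitary-conjugacy statement (which in the paper rests on Lang--Steinberg); instead you use only the first part of Lemma~\ref{hermitian1} to pin the minimal polynomial in $F_0[x]$, and then prove orthogonality directly via the B\'ezout identity, exploiting that any $p(T)$ with $p\in F_0[x]$ is self-adjoint. This is more elementary and self-contained, and it makes transparent exactly where the $F_0$-rationality is used. The paper's approach, by contrast, packages the orthogonality into the single stronger fact that similarity of Hermitian matrices upgrades to unitary conjugacy, which is efficient once that lemma is in hand but conceals the mechanism. Either way the uniqueness and the $X\perp Y$ consequence are routine, as you note.
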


The next result follows easily from the fact that any nondegenerate 
form has an orthonormal basis.  

\begin{lemma} \label{hermitian2}  If $D$ is a rank $r$ Hermitian matrix in $M_{m}(q^2)$,
then $D= X^*X$ for some $X \in M_{r \times m}(q^2)$. 
\end{lemma}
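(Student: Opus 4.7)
The plan is to reduce the statement to a convenient normal form under Hermitian congruence. The key observation is that the property ``$D = X^*X$ for some $X \in M_{r \times m}(q^2)$'' is preserved under the action $D \mapsto PDP^*$ with $P \in \GL_m(q^2)$, since $P X^*X P^* = (XP^*)^*(XP^*)$ and $XP^*$ is still $r \times m$. So it suffices to prove the lemma after replacing $D$ by any Hermitian matrix congruent to it.

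The natural normal form to aim for is $D_0 := \mathrm{diag}(I_r, 0_{m-r})$, for which the lemma is trivial: take $X_0 := [\,I_r \mid 0_{r \times (m-r)}\,] \in M_{r \times m}(q^2)$ and verify $X_0^* X_0 = D_0$ by direct block multiplication. To realize this normal form I would view $D$ as the Gram matrix of the Hermitian sesquilinear form $\langle u, v\rangle := u^* D v$ on $F^m$. Its radical coincides with $\ker D$ and has dimension $m-r$; since the radical is orthogonal to every subspace, picking any complement $V_1$ to $\ker D$ yields an internal orthogonal decomposition, and the restricted form on $V_1$ is nondegenerate Hermitian of rank $r$. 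By the fact recalled immediately before the statement (every nondegenerate Hermitian form over $\FF_{q^2}$ has an orthonormal basis), I can choose an orthonormal basis of $V_1$; concatenating with any basis of $\ker D$ gives a basis of $F^m$ in which the Gram matrix is precisely $D_0$, and the change-of-basis matrix $P$ satisfies $PDP^* = D_0$.

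Pulling back, $X := X_0(P^{-1})^* \in M_{r \times m}(q^2)$ satisfies $X^*X = P^{-1} D_0 (P^{-1})^* = D$, which is the required decomposition. There is essentially no obstacle here: the only substantive ingredient is the orthonormal basis fact for nondegenerate Hermitian forms over $\FF_{q^2}$, and the rest is routine bookkeeping with conjugate-transposes and block sizes. The only point one needs to be mindful of is making sure the witness matrix has exactly $r$ rows (not $m$), which is why it is important to use $X_0 = [\,I_r\mid 0\,]$ rather than the square matrix $D_0$ itself.
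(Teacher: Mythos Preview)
Your proposal is correct and is exactly the argument the paper has in mind: the paper's entire proof is the one-line remark that the result ``follows easily from the fact that any nondegenerate form has an orthonormal basis,'' and your write-up simply spells out that reduction (split off the radical, orthonormalize the nondegenerate part to reach $\mathrm{diag}(I_r,0)$, then pull back the obvious witness $[\,I_r\mid 0\,]$).
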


\section{Quivers} \label{quivers}. 

We first recall some facts about quivers and their representations.  We refer
the reader to \cite{CB, DW, DR, Sc} for more details.  All quivers considered will be finite.

Fix a field $k$ and let $Q$ be a  quiver (i.e. a finite directed graph).  
Let $I = \{1, \ldots, n\}$ be the vertex
set of the graph and let $E$ denote the set of edges of $Q$.

A (finite dimensional) representation of $Q$ (over $k$) is an assignment of finite dimensional
vector spaces $V_i, i \in I$ and for each edge $e \in E$ from $i$ to $j$, a linear transformation
$T_e$ from $V_i$ to $V_j$.     The dimension vector of such a representation is
the vector $(\dim V_1, \ldots, \dim V_n)$.   If we fix the dimension vector, there is no
harm in considering only representations where the vector spaces $V_i$ are fixed.

Note that $G:=\GL(V_1) \times \ldots \times GL(V_n)$ acts the set of representations 
(for fixed $V_i$)
in the obvious way.  Two representations are equivalent if they are in the same $G$-orbit.
Note also that the representations of $Q$ are in bijection with modules over the path algebra
$k[Q]$ of the quiver.   If we consider representations with a fixed dimension vector, we
can then fix the spaces $V_i$.  In that case,   there is a bijection
between modules  and  $G$-orbits.   The next result is a special
case of the Noether-Deuring Theorem (see \cite[19.25]{La}).

\begin{lemma}   Let $Q$ be a quiver and let $k$ be a field.  If two representations
of $Q$ defined over $k$ become isomorphic over some field extension $k'$ of $k$, then
they are already isomorphic over $k$.
\end{lemma}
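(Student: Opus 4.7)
The plan is to recognize this as an instance of the Noether--Deuring theorem and prove that version directly. Under the standard dictionary between representations of $Q$ with a fixed dimension vector and modules over the path algebra $A := k[Q]$, a morphism of representations is precisely an $A$-module homomorphism, and extending scalars on the representation side corresponds to $M \otimes_k k'$ viewed as a module over $A \otimes_k k' \cong k'[Q]$. It therefore suffices to show: if $M$ and $N$ are finite-dimensional $A$-modules and $M \otimes_k k' \cong N \otimes_k k'$ as $A \otimes_k k'$-modules, then $M \cong N$ as $A$-modules.

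The first step is to reduce to a finite extension $k''/k$. Any isomorphism $\phi \colon M \otimes_k k' \to N \otimes_k k'$ uses only finitely many elements of $k'$, so it is defined over a finitely generated subextension $k_0 \subset k'$; write $k_0 = \mathrm{Frac}(R)$ for a finitely generated $k$-algebra $R$. After clearing denominators, $\phi$ and $\phi^{-1}$ have entries in a localization $R[h^{-1}]$, and the identities $\phi \phi^{-1} = \id$, $\phi^{-1} \phi = \id$ already hold in $R[h^{-1}]$. Selecting any maximal ideal $\mathfrak m$ of $R[h^{-1}]$, the Nullstellensatz gives that the residue field $k'' := R[h^{-1}]/\mathfrak m$ is a finite extension of $k$. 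Specializing $\phi$ at $\mathfrak m$ yields an $A \otimes_k k''$-isomorphism $M \otimes_k k'' \cong N \otimes_k k''$.

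The second step handles the finite extension case via Krull--Schmidt. Writing $d = [k'':k]$, restriction of scalars from $A \otimes_k k''$ to $A$ sends $M \otimes_k k''$ to $M^{\oplus d}$ and $N \otimes_k k''$ to $N^{\oplus d}$, so $M^{\oplus d} \cong N^{\oplus d}$ as $A$-modules. Because $M$ and $N$ are finite-dimensional over $k$, every indecomposable summand has a finite-dimensional, hence local, endomorphism algebra, so the Krull--Schmidt theorem forces the multiplicities of each indecomposable on the two sides to agree. Dividing the multiplicities by $d$ gives $M \cong N$.

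The principal obstacle is the specialization step when $k$ is finite: a nonzero polynomial over a finite field need not have a $k$-point outside its vanishing locus, so one cannot hope to specialize $\phi$ directly to a $k$-point and must allow passage to a finite extension $k''$. This is harmless because the subsequent Krull--Schmidt argument is insensitive to $d = [k'':k]$ and absorbs the extension cleanly; away from this subtlety, the argument is formal.
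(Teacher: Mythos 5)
Your proposal is correct and matches the paper's approach exactly: the paper simply notes that this lemma is ``a special case of the Noether--Deuring Theorem'' and cites Lam, while you correctly identify it as Noether--Deuring and supply the standard proof --- reduction to a finite extension $k''/k$ via specialization at a maximal ideal of a finitely generated $k$-algebra (using Zariski's lemma to guarantee the residue field is finite over $k$), followed by restriction of scalars to get $M^{\oplus d} \cong N^{\oplus d}$ and cancellation by Krull--Schmidt. Your remark about the finite-field subtlety (no $k$-rational specialization point available, but a finite extension suffices since the Krull--Schmidt step is insensitive to $d$) is exactly the right thing to flag.
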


Here are some other basic facts that we will require.

\begin{lemma}   If $k$ is algebraically closed and $Q$ is a  quiver,
 then the centralizer in $G$ of a given
representation of a quiver $Q$ is a connected algebraic group.
\end{lemma}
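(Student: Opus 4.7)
The plan is to identify the centralizer with the unit group of the endomorphism algebra of the representation and observe that this is a nonempty principal open subset of an affine space, hence irreducible and connected.

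Write $M$ for the given representation, with underlying spaces $V_1,\ldots,V_n$ and edge maps $T_e$, and let $G=\GL(V_1)\times\cdots\times\GL(V_n)$. First I would observe that the centralizer $C\subset G$ consists of tuples $(g_1,\ldots,g_n)$ with $g_jT_e=T_eg_i$ for every edge $e\colon i\to j$; these are precisely the conditions for $(g_i)$ to be an automorphism of $M$ as a module over the path algebra $k[Q]$. Thus $C=E^\times$, where $E=\End_{k[Q]}(M)$.

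Next I would view $E$ as a variety. It is a linear subspace of $\prod_i\End_k(V_i)$ cut out by the intertwining equations $\varphi_jT_e-T_e\varphi_i=0$, which are linear in the tuple $(\varphi_i)$. So $E$ is a finite-dimensional $k$-vector space, which, as a variety, is just affine space $\mathbb{A}^N$ with $N=\dim_kE$. It is, in addition, a finite-dimensional associative unital $k$-algebra with identity $1_E=(\id_{V_i})_i$.

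The key step is then to realize $E^\times$ as an open subvariety of $E$. For each $\varphi\in E$, left multiplication by $\varphi$ gives a $k$-linear endomorphism $L_\varphi\colon E\to E$; since $E$ is finite dimensional, $\varphi$ is a unit if and only if $L_\varphi$ is invertible, i.e. $\det(L_\varphi)\neq 0$. The map $\varphi\mapsto\det(L_\varphi)$ is a polynomial function on $E\cong\mathbb{A}^N$, so $E^\times$ is the principal open subset where this single polynomial is nonzero. It is nonempty because it contains $1_E$.

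Finally, $\mathbb{A}^N$ is irreducible, so any nonempty Zariski-open subset of it is irreducible, hence connected; transporting this back shows $C=E^\times$ is a connected (closed) subgroup of $G$. The only point to check carefully is that the equations cutting out $C$ inside $G$ agree scheme-theoretically with the description of $E^\times$ inside the affine space $E$, but this is immediate from the definitions since the intertwining relations are the same in both pictures. I do not expect any serious obstacle beyond this bookkeeping; the content of the argument is the standard fact that the unit group of a finite-dimensional associative $k$-algebra is open in the algebra.
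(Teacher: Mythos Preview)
Your argument is correct and follows essentially the same route as the paper: both identify the centralizer with the unit group of the endomorphism algebra $E=\End_{k[Q]}(M)\subset\prod_i\End_k(V_i)$ and then use that the unit group of a finite-dimensional associative $k$-algebra is a nonempty Zariski-open subset of the irreducible affine space $E$, hence connected. Your version simply spells out the step (via $\varphi\mapsto\det L_\varphi$) that the paper leaves implicit.
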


\begin{proof}    We first work in $A:=\mathrm{End}(V_1) \times \ldots \times  \mathrm{End}(V_n)$.
Note that the centralizer of a representation in this algebra is a subalgebra of
$A$  (i.e. the endomorphism ring of the module over the path algebra)
and the centralizer is $G$ is the unit group of this algebra and so is connected
(since it is Zariski dense in the subalgebra).
\end{proof}

The next result shows that reversing the direction of edges does not really make
a big difference in the representation theory. 

\begin{lemma} \label{counting}
If $k$ is a finite field and $Q$ is a quiver, then the number of equivalence
classes of representations of $Q$ for a fixed dimension vector depends only on
$Q$ as an undirected graph.
\end{lemma}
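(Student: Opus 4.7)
The plan is to apply Burnside's orbit-counting formula. Fix a dimension vector $(d_1, \ldots, d_n)$ and $k$-vector spaces $V_i$ with $\dim_k V_i = d_i$. Let $G = \prod_i \GL(V_i)$ act on
$$
R(Q) = \prod_{e \in E} \Hom_k(V_{s(e)}, V_{t(e)})
$$
by $(g \cdot T)_e = g_{t(e)} T_e g_{s(e)}^{-1}$, so that equivalence classes of representations are $G$-orbits. Since $G$ does not depend on the orientation of the edges of $Q$ and is finite, I would use
$$
|R(Q)/G| = \frac{1}{|G|}\sum_{g \in G} |R(Q)^g|
$$
to reduce the lemma to showing that $|R(Q)^g|$ is unchanged when any single edge of $Q$ is reversed.

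For a fixed $g = (g_1, \ldots, g_n) \in G$, the fixed-point set factors over the edges, with factor at $e : i \to j$ equal to $\{T \in \Hom_k(V_i, V_j) : g_j T = T g_i\}$. Viewing each $V_i$ as a $k[x]$-module via $g_i$, this intertwining condition is exactly $k[x]$-linearity of $T$, so the factor is $\Hom_{k[x]}(V_i, V_j)$, of cardinality $q^{\dim_k \Hom_{k[x]}(V_i, V_j)}$. The lemma thus reduces to the symmetry
$$
\dim_k \Hom_{k[x]}(V_i, V_j) = \dim_k \Hom_{k[x]}(V_j, V_i)
$$
for any pair of finite-dimensional $k[x]$-modules $V_i, V_j$, since reversing an edge interchanges the two sides.

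This symmetry I would verify directly from the structure theorem for finitely generated $k[x]$-modules. Decomposing $V_i$ and $V_j$ into indecomposables of the form $k[x]/p(x)^a$ with $p \in k[x]$ monic irreducible, and using additivity of $\Hom$ in both arguments, reduces the question to the classical computation
$$
\dim_k \Hom_{k[x]}\bigl(k[x]/p^a,\, k[x]/q^b\bigr) = \begin{cases} 0 & p \ne q,\\ \min(a,b)\cdot \deg p & p = q,\end{cases}
$$
both of which are manifestly symmetric in $(a,b)$. The main obstacle is not really an obstacle at all but rather the bookkeeping to move cleanly between the quiver-representation and commutative-algebra viewpoints; once the Hom-symmetry is in place, an induction on the number of reversed edges lets one travel between any two orientations of the same underlying graph, completing the argument.
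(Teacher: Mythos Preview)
Your argument is correct and shares the paper's overall architecture: apply Burnside's orbit-counting formula, factor the fixed-point set of a given $g=(g_1,\ldots,g_n)$ over the edges, and reduce to showing that the contribution of a single edge $i\to j$ is symmetric in $i$ and $j$. The difference is in how that symmetry is established. You view each $V_i$ as a $k[x]$-module via $g_i$, identify the edge factor with $\Hom_{k[x]}(V_i,V_j)$, and then compute its $k$-dimension explicitly from the structure theorem for finitely generated $k[x]$-modules, obtaining the visibly symmetric value $\sum \min(a,b)\deg p$. The paper instead notes that the edge factor is the $(g_i,g_j)$-fixed subspace of $\Hom_k(V_i,V_j)\cong V_j\otimes V_i^*$, that reversing the edge replaces this module by its dual $V_i\otimes V_j^*$, and that any element of a finite group has the same number of fixed points on a module $W$ as on $W^*$ (since the dual action is by transpose-inverse and a matrix is conjugate to its transpose). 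The paper's route is a one-line conceptual observation that works for any group action, while yours is more hands-on but has the virtue of making the exact dimension of the intertwiner space completely explicit; either is adequate here.
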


\begin{proof} We may fix the $V_i, i \in I$.   Let $g:=(g_1, \ldots, g_n) \in G$.
The representations of $Q$ with the $V_i$ fixed is a linear space and as a $kG$-module
is isomorphic to a direct sum of $V_i \otimes V_j^*$  (one term for each edge from
$i$ to $j$).   

If we reverse one edge it has the effect of replacing one $V_i \otimes V_j^*$ by
$V_j \otimes V_i*$.

We claim that the number of fixed points of $g$ depends only $Q$ as undirected
graph -- i.e. the direction of each edge does not change this number.  It suffices
to consider the case where we reverse precisely one arrow.  Since the $kG$-modules
are a direct sum,  we see the problem reduces to showing that $(g_i, g_j)$ has
the same number of fixed points on $V_i \otimes V_j^*$ and $V_j \otimes V_i^*$.

This follows from the more general fact that we have a finite group $H$
and $W$ is a finite dimensional $kH$-modules, then for any $h \in H$, 
$h$ has the same number of fixed points on $W$ as on $W^*$.   This is because
the dual representation is obtained by applying transpose inverse to the original
representation and a matrix and its transpose are conjugate in $\GL$.

Now the result follows by Burnside's Lemma (due to Frobenius):  the number of orbits
of a finite group acting on a finite set is the average number of fixed points
of an element in the group. 
\end{proof}

We are interested in a very special easy case.   We first consider the quiver
$Q$ with two vertices $\{1, 2 \}$ with two edges each going from $1$ to $2$.
In this case, the representations of $Q$ correspond to pairs (or pencils) of linear transformations
$S, T$ from $V_1$ to $V_2$ and were classified by Kronecker (see \cite{CP, Ga}).  
Let $d_i=\dim V_i$.   

In particular the indecomposable representations are (up to equivalence) the following:

\begin{enumerate}
\item   $d_1=d_2=d$,  $S=I$ and $T$ is the companion matrix of a monic polynomial
$f(x)$ which is a  power of an 
irreducible polynomial in $k[x]$
(other than $x$); 
\item   $d_1=d_2=d$,  $S=I$ and $T$ is a single nilpotent Jordan block;
\item   $d_1=d_2=d$,  $T=I$ and $S$ is a single nilpotent Jordan block;
\item  $d= d_2=d_1 -1$ with $d_1 \ge 1$.  Let $e_0, \ldots, e_d$ be a basis
for $V_1$ and $f_1, \ldots, f_d$ be a basis for $V_2$.  Let   $Se_i=f_i$ for $i > 0$
and $Se_0=0$  and $Tf_i=e_{i-1}$;
\item   $d=d_1=d_2-1$ and $S$ and $T$ are interchanged from the previous case.
\end{enumerate}

For our purposes,  we want to study the quiver $Q$ with two vertices $1, 2$ 
over a field $k$ and 
two maps $S:V_1 \rightarrow V_2$ and $T:V_2 \rightarrow V_1$.

It is known that representations of the two quivers have almost
identical descriptions.  We deduce the second case from the first.  We will work
over a finite field (but one can extend the analysis to the case of general fields).

Now we write down some indecomposable modules for our second quiver. 
They are quite closely related to the first.   We will show that this is a complete
set of indecomposables.

\begin{theorem}  Let $k$ be a finite field.
The indecomposable finite dimensional modules for $k[Q]$ with
$Q$ as above are:
\begin{enumerate}
\item  $d_1=d_2=d$,  $S=I$ and $T$ is the companion matrix of a monic
polynomial $f(x) \in k[x]$ that is power of an 
irreducible polynomial in $k[x]$
(other than $x$); 
\item   $d_1=d_2=d$,  $S=I$ and $T$ is a single nilpotent Jordan block;
\item   $d_1=d_2=d$,  $T=I$ and $S$ is a single nilpotent Jordan block;
\item   $d= d_2=d_1 -1$ with $d_1 \ge 1$.  Let $e_0, \ldots, e_d$ be a basis
for $V_1$ and $f_1, \ldots, f_d$ be a basis for $V_2$.  Let   $Se_i=f_i$ for $i > 0$
and $Se_0=0$  and $Tf_i=e_{i-1}$;
\item   $d=d_1=d_2-1$ and $S$ and $T$ are interchanged from the previous case.
\end{enumerate}
\end{theorem}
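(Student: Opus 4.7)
The plan is to exploit the fact, guaranteed by Lemma~\ref{counting}, that the total number of isomorphism classes of representations of any fixed dimension vector $(d_1,d_2)$ depends only on the underlying undirected graph. Since the Kronecker quiver $Q'$ (two arrows $1 \to 2$) and our quiver $Q$ (one arrow $1 \to 2$ and one arrow $2 \to 1$) share the same underlying graph, it suffices to exhibit a list of pairwise non-isomorphic indecomposables for $Q$ whose dimension vectors match those of the Kronecker indecomposables, and then use Krull--Schmidt to conclude the list is complete.

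First I would verify that each of the five modules (1)--(5) in the statement is indecomposable. For types (1)--(3), where one of $S,T$ is the identity, giving an endomorphism of the representation amounts to giving a pair $(\phi_1,\phi_2) \in \End(V_1)\times\End(V_2)$ with $\phi_1 = \phi_2$ (forced by $I$) and commuting with the remaining companion matrix or Jordan block; the endomorphism ring is therefore $k[x]/(f(x))$ with $f$ a prime power, which is local, so the representation is indecomposable. For types (4) and (5), a direct calculation on the basis $e_i, f_j$ shows that any endomorphism is scalar, so the endomorphism ring is $k$ and the module is indecomposable. Pairwise non-isomorphism is then obvious from the dimension vectors and from the Jordan invariants of $S$, $T$, $ST$, $TS$ on the square cases.

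Next I would run the counting argument. Let $\mathcal{I}$ be the set of isomorphism classes of indecomposables (1)--(5) for $Q$ and $\mathcal{I}'$ the analogous Kronecker list for $Q'$. Because the parametrizations are by the same data and the assigned dimension vectors are identical, there is a bijection $\mathcal{I}\to \mathcal{I}'$ preserving dimension vectors. By Krull--Schmidt, the number of isomorphism classes of finite-dimensional $k[Q]$-modules of dimension vector $\mathbf{d}$ that are direct sums of members of $\mathcal{I}$ equals the number of multisets in $\mathcal{I}$ summing to $\mathbf{d}$; the same count works for $\mathcal{I}'$ and $k[Q']$-modules. Kronecker's classification says every $k[Q']$-module is such a direct sum, hence the Kronecker count equals the total number of representations of $Q'$ of dimension $\mathbf{d}$. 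By Lemma~\ref{counting} this number coincides with the total number of representations of $Q$ of dimension $\mathbf{d}$. Comparing, every representation of $Q$ must already be a direct sum of modules in $\mathcal{I}$, and so $\mathcal{I}$ exhausts the indecomposables.

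The main obstacle is really only bookkeeping: one has to be sure that the two lists $\mathcal{I}$ and $\mathcal{I}'$ match up dimension-vector-by-dimension-vector (in particular, that the ``string'' modules (4) and (5) have the same $(d_1,d_2)$ pattern as their Kronecker counterparts), since if any type were missing or duplicated the Krull--Schmidt counts would disagree and the argument would collapse. The other mild subtlety is the verification of indecomposability for (4) and (5); once a direct endomorphism computation shows their endomorphism ring is $k$, the rest of the argument is forced.
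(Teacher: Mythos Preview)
Your approach is essentially identical to the paper's: verify the listed modules are indecomposable, note the dimension-vector-preserving bijection with the Kronecker indecomposables, and invoke Krull--Schmidt together with Lemma~\ref{counting} to conclude the list is complete. One small correction: for types (4) and (5) the endomorphism ring is not $k$ but $k[x]/(x^{d_1})$, since $(TS,ST)$ is a nonzero nilpotent endomorphism whenever $d\ge 1$; this ring is still local, so indecomposability holds and the rest of your argument is unaffected.
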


\begin{proof}  It is easy to see that the above representations are indecomposable
(consider the products $ST$ and $TS$ -- they are indecomposable in each case)
and there is an obvious  bijection between the indecomposables given above and for the
other quiver with two arrows in the same direction.  Note this bijection preserves
dimension vectors.   Since any representation is (up to equivalence) uniquely a direct
sum of indecomposable representations, it follows that the number of inequivalent 
representations 
of $Q$ with a fixed dimension vector that involves only the indecomposables described
above is the same as the total number of inequivalent representations of the
Kronecker quiver with the same dimension vector.  Now by Lemma \ref{counting},
this accounts for all representations and the result holds.
\end{proof}

In fact, the assumption that $k$ is finite is superfluous.  This can be seen
by splitting the problem into two parts -- the case where $\dim V_1= \dim V_2$
and $ST$ is invertible and the case where $ST$ is nilpotent.  We will not pursue
this since our interest is in the case of finite fields.  

We denote these indecomposables by $I[f], J[d], J'[d], R[d]$ and $R'[d]$.
The two obvious invariants associated to a representation are the dimension
vector and the similarity classes of $ST$ and $TS$ (and these determine
the dimension vector).   For the indecomposables,
these uniquely determine the module aside from $J[d]$ and $J'[d]$.

\section{Algebraic Groups and Descent}  \label{descent}

We recall some basic facts about algebraic groups over an algebraically
closed field $K$  of characteristic $p > 0$.   We refer the reader to 
\cite{Sp, SS, St}.    If $\sigma$ is endomorphism of an algebraic group $G$, 
we let  $G_{\sigma}$ be the fixed point set of $\sigma$ on $G$.   The next result
was proved in a special case by Lang and in general by Steinberg \cite{St}.  

\begin{theorem} \label{lang-steinberg}  Let $G$ be a connected algebraic
group over $K$.  If $\sigma$ is an endomorphism of $G$ with  $G_{\sigma}$
finite, then the map  $g \rightarrow g^{-1} \sigma(g)$ is surjective.
\end{theorem}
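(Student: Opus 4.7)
The plan is to establish surjectivity of the Lang map $L: G \to G$, $L(g) = g^{-1} \sigma(g)$, via a two-step approach: first show $L$ is dominant using the fiber-dimension theorem, then upgrade this to full surjectivity by intersecting the image of $L$ with that of a twisted analogue.

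For dominance, the fiber $L^{-1}(e) = \{g \in G : \sigma(g) = g\} = G_\sigma$ is finite by hypothesis, hence zero-dimensional. Applying the fiber-dimension theorem to the morphism $L$ between the irreducible variety $G$ and itself, every component of every non-empty fiber of $L$ has dimension at least $\dim G - \dim \overline{L(G)}$; applied to $L^{-1}(e)$, this gives $0 \ge \dim G - \dim \overline{L(G)}$ and forces $\overline{L(G)} = G$. Hence $L$ is dominant, and as a dominant morphism between irreducible varieties its image contains a non-empty Zariski-open subset $U \subseteq G$. For a fixed $x \in G$ I would then carry out the same argument for the twisted Lang map $L_x(g) := g^{-1} x \sigma(g)$: a direct calculation shows $L_x(g_1) = L_x(g_2)$ precisely when $h := g_2 g_1^{-1}$ satisfies $\sigma(h) = x^{-1} h x$, so the fibers of $L_x$ are cosets of the fixed-point set $G_{\tau_x}$ of the endomorphism $\tau_x(g) := x \sigma(g) x^{-1}$. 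Once one verifies that $G_{\tau_x}$ is finite, the fiber-dimension argument shows $L_x$ is also dominant, with image containing a non-empty Zariski-open $U_x \subseteq G$.

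Given dominance of both $L$ and $L_x$, irreducibility of $G$ forces $U \cap U_x \neq \emptyset$; pick $g_1, g_2$ with $L(g_1) = L_x(g_2)$. The relation $g_1^{-1}\sigma(g_1) = g_2^{-1} x \sigma(g_2)$ rearranges to
\[
x = g_2 g_1^{-1} \sigma(g_1 g_2^{-1}) = (g_1 g_2^{-1})^{-1} \sigma(g_1 g_2^{-1}) = L(g_1 g_2^{-1}),
\]
which exhibits $x \in L(G)$. Since $x$ was arbitrary, $L$ is surjective. The main obstacle is the finiteness of $G_{\tau_x}$ for arbitrary $x \in G$. In Lang's original Frobenius case this is immediate: $d\sigma_e = 0$ forces $d(\tau_x)_e = \mathrm{Ad}(x) \circ d\sigma_e = 0$, so the tangent space of $G_{\tau_x}$ at $e$, given by $\ker(d(\tau_x)_e - \id)$, is trivial. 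In Steinberg's generality one invokes the structural fact that an endomorphism with finite fixed-point set is surjective with nilpotent differential, and then runs a similar argument for $\tau_x$ to conclude.
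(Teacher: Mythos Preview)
The paper does not prove this theorem; it is quoted as a known result of Lang (in the Frobenius case) and Steinberg (in general), so there is no proof in the paper to compare against. On its own merits, your two-step strategy---establish dominance of $L$ via the fiber-dimension theorem, then for each $x$ establish dominance of the twisted map $L_x$ and intersect the two dense-open images---is the standard route to Lang's theorem, and your treatment of the Frobenius case (where $d\sigma_e = 0$, hence $d(\tau_x)_e = \mathrm{Ad}(x)\circ d\sigma_e = 0$) is correct. This already suffices for everything the paper uses, since the endomorphism $\rho$ of Section~\ref{desc:orbits} factors through the $q$-Frobenius and so has zero differential.

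The genuine gap is in your final sentence, where you pass to Steinberg's generality. The assertion that an endomorphism with finite fixed-point set has \emph{nilpotent} differential is false: take $G=\mathbb{G}_a$ over $K$ with $p$ odd and $\sigma(t)=2t$; then $G_\sigma=\{0\}$ but $d\sigma_e=2$ is invertible. The surjectivity claim also fails in general: on $G=\mathbb{G}_a^2$ the endomorphism $\sigma(x,y)=(y,0)$ has $G_\sigma=\{(0,0)\}$ yet is not surjective. Even if $d\sigma_e$ were nilpotent, the composite $d(\tau_x)_e=\mathrm{Ad}(x)\circ d\sigma_e$ need not be nilpotent, nor need it avoid the eigenvalue $1$, so ``a similar argument for $\tau_x$'' does not go through automatically. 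Showing that $G_{\tau_x}$ is finite for every $x$---equivalently, that every orbit of the twisted action $g\ast a = ga\sigma(g)^{-1}$ is open---is precisely where Steinberg's extension has real content, and it is not a consequence of a differential calculation alone.
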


It also follows that any endomorphism $\sigma$ with only a finite number of fixed
points is an abstract automoprhism of $G$.   The previous result can be
reformulated in the following way.    Set $H = \langle G, \sigma \rangle$.   Then all elements in the coset $\sigma G =G \sigma$
are conjugate via an element of $G$. 

If a group $H$ acts on a set $W$ and $w \in W$, we let $C_H(w)$ be the stabilizer 
of $w$ in $H$.   The next well known result is an easy consequence of the Lang-Steinberg theorem.

\begin{corollary} \label{cor:orbits}
Let $G$ be a connected algebraic group over $K$ and let $\sigma$
be an endomorphism of $G$ with $G_{\sigma}$ finite.  Let $H:= \langle G, \sigma \rangle$.
Assume that $H$ acts on an irreducible algebraic variety $Z$ with $G$ and $\sigma$
acting algebraically.   Let $Y \subseteq X$ be a $G$-orbit. 
\begin{enumerate}
\item   $\sigma$ fixes $Y$ if and only $\sigma(y)=y$ for some $y \in Y$.
\item   Let $Y$ be a $\sigma$ invariant orbit of $G$ on $X$.
 If $C_G(y)$ is connected for $y \in Y$,  then $Y_{\sigma}$ is a 
single $G_{\sigma}$ orbit.
\end{enumerate}
\end{corollary}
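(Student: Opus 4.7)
The plan is to prove both parts as standard consequences of Theorem \ref{lang-steinberg}, with part (1) being almost immediate from surjectivity of $g \mapsto g^{-1}\sigma(g)$ on $G$, and part (2) being an application of the same theorem to a subgroup.

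For part (1), the ``if'' direction is a one-line check: if $\sigma(y)=y$ for some $y \in Y = Gy$, then for any $gy \in Y$ we have $\sigma(gy) = \sigma(g)\sigma(y) = \sigma(g)y \in Gy = Y$ (using that $\sigma$ is an endomorphism of $G$), so $Y$ is $\sigma$-stable. For the ``only if'' direction, assume $\sigma Y \subseteq Y$ and pick any $y \in Y$. Then $\sigma(y) = gy$ for some $g \in G$. Apply Theorem \ref{lang-steinberg} to write $g^{-1} = h^{-1}\sigma(h)$ for some $h \in G$, i.e.\ $g = \sigma(h)^{-1}h$. A short calculation then gives $\sigma(hy) = \sigma(h)\sigma(y) = \sigma(h)gy = hy$, so $hy \in Y$ is a $\sigma$-fixed point.

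For part (2), by part (1) there exists $y_0 \in Y_\sigma$, so $Y_\sigma$ is nonempty. Let $y \in Y_\sigma$ be arbitrary and write $y = gy_0$ for some $g \in G$. Applying $\sigma$ and using $\sigma(y)=y$, $\sigma(y_0) = y_0$ gives $gy_0 = \sigma(g)y_0$, i.e.\ $g^{-1}\sigma(g) \in C_G(y_0)$. The key observation is that $\sigma$ restricts to an endomorphism of $C_G(y_0)$ (since $\sigma(y_0)=y_0$ forces $\sigma(C_G(y_0)) \subseteq C_G(y_0)$) and its fixed points there lie in $G_\sigma$, hence form a finite set. Since $C_G(y_0)$ is assumed connected, Theorem \ref{lang-steinberg} applied inside $C_G(y_0)$ produces $c \in C_G(y_0)$ with $c^{-1}\sigma(c) = g^{-1}\sigma(g)$. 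Setting $h = gc^{-1}$, a direct computation shows $h^{-1}\sigma(h) = 1$, so $h \in G_\sigma$, and $hy_0 = gc^{-1}y_0 = gy_0 = y$. Thus $y$ lies in the $G_\sigma$-orbit of $y_0$, proving $Y_\sigma$ is a single $G_\sigma$-orbit.

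The only subtle point, and the one I would want to be careful about, is verifying that Theorem \ref{lang-steinberg} genuinely applies to $C_G(y_0)$ with the restricted $\sigma$: one needs connectedness (given by hypothesis) and finiteness of the fixed point set (which follows from $C_G(y_0)_\sigma \subseteq G_\sigma$). Everything else is a formal manipulation with cocycle-type identities of the form $g \mapsto g^{-1}\sigma(g)$, and no further input from the structure of $Z$ is used beyond the action axioms.
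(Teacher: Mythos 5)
Your proof is correct and follows essentially the same route as the paper's: part (1) is a direct application of Theorem \ref{lang-steinberg} (the paper phrases it via the equivalent fact that all elements of the coset $G\sigma$ are $G$-conjugate, while you unwind this into an explicit cocycle computation), and part (2) is identical, applying Lang--Steinberg inside the connected centralizer $C_G(y_0)$ after noting it is $\sigma$-stable with finite $\sigma$-fixed subgroup. Your explicit flagging of the hypotheses needed to invoke the theorem on $C_G(y_0)$ is a nice touch the paper leaves implicit.
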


\begin{proof}   Clearly, if $\sigma$ fixes a point of $Y$, it fixes the orbit
(as it permutes the orbits).   Conversely if $\sigma y \in Y$, for some $y \in Y$,
then $\sigma y = gy$ for some $g \in G$, whence $g^{-1}\sigma$ has a fixed point
on $Y$.  Since $g^{-1}\sigma$ is conjugate to $\sigma$, $\sigma$ has
a fixed point on $Y$ as well.

Let $Y$ be a $\sigma$ invariant orbit and $y, z \in Y_{\sigma}$
with  $C:=C_G(y)$ is connected.     Note that $C$ is   $\sigma$ invariant.   
So $z = gy$ for some $g \in G$ and applying $\sigma$ we see that $z = \sigma(g) y$.
Thus,  $g^{-1} \sigma(g) \in C$.   Since $C$ is connected,  $g^{-1} \sigma(g)
= c^{-1} \sigma(c)$ for some $c \in C$.   Hence $gc^{-1} \in G_{\sigma}$ and
$gy = gc^{-1}y=z$, whence $y$ and $z$ are in the $G_{\sigma}$ orbit.
\end{proof}.

\section{Orbits}. \label{desc:orbits}

Let $k$ be a  field of characteristic $p$.   Let $m \le n$ be positive
integers.   We consider the action of $G:=GL_m(k) \times GL_n(k)$
acting on 
$W:=M_{m \times n}(k) \times M_{n \times m}$ given by 
$$
(X,Y)(A,B): =  (XAY^{-1}, YBX^{-1}).
$$

 As we noted earlier, two invariants of the orbit containing
$(A,B)$ are the conjugacy class of of $AB$ and the conjugacy class of $BA$.
 The indecomposable
orbits are described in Section \ref{quivers}.   It follows from the description
that the indecomposables are determined by these two invariants aside from
the cases where the orbits have representative $(I, J_d)$ and $(J_d,I)$ where
$J_d$ is a single nilpotent Jordan block of size $d$.  These orbits both have
invariants $J_d, J_d$.

We first recall (from the section on quivers):

\begin{lemma}  \label{connected}   Suppose that $k$ is algebraically closed.
The stabilizer of a point $(A,B)$ is connected.  
\end{lemma}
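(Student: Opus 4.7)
The plan is to identify the stabilizer of $(A,B)$ in $G = \GL_m(k) \times \GL_n(k)$ with the unit group of a finite-dimensional associative $k$-algebra, from which connectedness is automatic over the algebraically closed field $k$. Morally, this is just the general observation about quiver representations already made in Section~\ref{quivers}, specialized to our quiver $Q$ with two vertices and two arrows going in opposite directions.

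First I would translate the group action into quiver language. The pair $(A,B)$, with $A : k^n \to k^m$ and $B : k^m \to k^n$, is precisely the data of a representation $M$ of $Q$ on the vector spaces $V_1 = k^m, V_2 = k^n$, and the $G$-action on the space $M_{m\times n}(k) \times M_{n\times m}(k)$ is exactly the natural action of $\GL(V_1) \times \GL(V_2)$ on representations of $Q$ with this fixed dimension vector. A pair $(X,Y) \in G$ fixes $(A,B)$ iff $XA = AY$ and $YB = BX$, which is exactly the condition that $(X,Y)$ defines a $k[Q]$-module automorphism of $M$. Hence
\[
C_G((A,B)) \;=\; \Aut_{k[Q]}(M) \;=\; E^{\times},
\]
where $E := \End_{k[Q]}(M)$ is a finite-dimensional $k$-subalgebra of $\End(V_1) \oplus \End(V_2)$.

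Next I would establish that $E^{\times}$ is a connected algebraic group. As an affine variety, $E$ is a $k$-linear subspace of $\End(V_1) \oplus \End(V_2)$, and so is isomorphic to affine space $\mathbb{A}^{\dim_k E}$; in particular it is irreducible. The unit group $E^{\times}$ is a nonempty Zariski-open subvariety of $E$: it contains $1$, and it is the complement of the vanishing locus of the polynomial $e \mapsto \det(\ell_e)$, where $\ell_e$ is left multiplication by $e$ on $E$ (equivalently, $E^{\times} = E \cap (\GL(V_1) \times \GL(V_2))$). A nonempty open subset of an irreducible variety is irreducible, hence connected, proving the claim.

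There is no real obstacle here; the only step that requires care is the identification of the stabilizer with the unit group of the endomorphism algebra, after which the geometric argument is immediate. The whole purpose of passing to the quiver formulation is to make the connectedness of centralizers essentially free, and this lemma is exactly that general principle applied to the specific quiver in play.
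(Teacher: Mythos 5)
Your proposal is correct and takes essentially the same route as the paper: the paper derives this lemma directly from the general quiver fact (Lemma 3.2), whose proof likewise identifies the stabilizer with the unit group of the endomorphism algebra of the $k[Q]$-module and observes that this unit group is a dense open subset of the (irreducible, affine) algebra, hence connected. Your writeup just spells out the identification and the openness/density argument a bit more explicitly.
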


We now work over the finite field $F$ of $q^2$-elements.  Let $k$
be the algebraic closure of $F$.  

Let $\sigma$ denote the $q$-Frobenius map on $G$ (on each factor)
and let $\tau$ be the graph automorphism (i.e. the transpose inverse map)
on each factor of $G$.   Let $H = \langle G, \sigma, \tau \rangle$.  We extend
the action of $G$ to $H$ on $W$ as follows:

\begin{enumerate}
\item  $\sigma(A,B)= (\sigma(A), \sigma(B))$; and
\item $\tau(A,B)=(B^{\top}, A^{\top})$.
\end{enumerate}

It is straightforward to check this gives an action of $H$ on $W$.

Let $\rho = \sigma \tau$.   Then $\rho$ is an endomorphism of $G$
with fixed point group  $G_{\rho}=GU_m(q) \times GU_n(q)$.    
Note that $\rho$ fixes $(A,B)$ if and only if 
$A= \sigma(B^{\top})$ and $B=\sigma(A^{\top})$.  This implies that
$A$ and $B$ are fixed by $\sigma^2$ and so are matrices over $F$.
    Thus, we see that  $W_{\rho} = \{(A,A^*) | A \in M_{m \times n}(q^2)\}$.

By Corollary \ref{cor:orbits}(1), the orbits or $G_{\rho}$ on $W_{\rho}$ 
correspond to the $\rho$ stable orbits.
An invertible orbit containing $(I,B)$ is stable if and only  $B$ is similar to an invertible
Hermitian matrix (equivalently to a matrix over the field of $q$ elements).   The nilpotent
orbits are all defined over the prime field and so are all fixed by $\sigma$.  Thus, it suffices
to see how $\tau$ acts on the indecomposable nilpotent orbits.  The ones with dimension
vector $(d, d \pm 1)$ are uniquely determined by their dimension vector and so are invariant.
The other indecomposable nilpotent orbits have representatives $(I, J_d)$ or $(J_d,I)$
for some $d$ (where $J_d$ is the matrix consisting of single Jordan block) and clearly $\tau$ interchanges the orbits of $(I, J_d)$ and $(J_d, I)$.

Thus, we see that the orbits of $G_{\rho}$ on $W_{\rho}$ must have the same number
of terms for those orbits, no other condition on the nilpotent orbits and the invertible part
must correspond to a similarity class defined over the field of $q$ elements.  

We can now give a formula for the number of orbits.  As noted in the introduction, it 
suffices to do this in the case of nilpotent orbits. (i.e. $AA^*$ is nilpotent).    Let
$p(d)$ be the number of partitions of $d$.  

From the above discussion, we see that every nilpotent orbit on $M_{m \times n}(q^2)$ can be written uniquely as direct sum of $4$ pieces.    Choose nonnegative integers $m_1, m_2, m_3$
so that $m_1 + m_2 + 2m_3 = m$.   The first piece would correspond to the orbit (for 
the algebraic group) to   $\sum  R[\lambda_i]$ where the $\lambda_i$ form a partition of $m_1$,
the second piece to  $\sum  R'[\lambda_i]$ where the $\lambda_i$ form a partition of $m_2$
and the third piece to $\sum  J[\lambda_i] + J'[\lambda_i]$ where the $\lambda_i$ form a partition of $m_3$.   The fourth piece is just a sum of some number of copies of
$J'[1]$ to make sure the matrix has the right size.
If $n \ge 2m$,  all such decompositions are possible.  If $n < 2m$,  then there is an extra condition
(that the corresponding partitions for the columns is no more than $n$) and so there will be fewer orbits.   This gives:

\begin{cor} \label{nilpotent orbits}  Let $m, n$ be  positive integers.  Assume that 
$n \ge 2m$.   The number of
nilpotent orbits of $GU(m,q) \times GU(n,q)$ on $M_{m \times n}(F)$ is:
$$
\sum  p(m_1)p(m_2)p(m_3),
$$
where the sum is over all triples of non-negative integers $m_i$ with $m_1 + m_2
+2m_3 = m$.  
\end{cor}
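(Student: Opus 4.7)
The plan is to count the nilpotent $G_\rho$-orbits by first decomposing each such orbit as a direct sum of indecomposable quiver representations, then selecting those decompositions that survive $\rho$-stability, and finally invoking Lang--Steinberg descent. I would begin by combining Lemma \ref{connected} with Corollary \ref{cor:orbits}(2): since stabilizers in $G$ of representations of $Q$ are connected, every $\rho$-stable $G$-orbit on $W$ (over the algebraic closure) contributes exactly one $G_\rho$-orbit on $W_\rho$. It therefore suffices to enumerate the $\rho$-stable $G$-orbits on $W$ whose representatives have $ST$ nilpotent, equivalently whose indecomposable summands lie only among the types $R, R', J, J'$ (no $I[f]$ pieces).

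Next I would analyze the action of $\rho = \sigma \tau$ on these orbits. Each of the indecomposables $J[d], J'[d], R[d], R'[d]$ can be realized by matrices with entries in the prime field, so $\sigma$ acts trivially and $\rho$-stability reduces to $\tau$-stability. By uniqueness of the indecomposable with dimension vector $(d+1, d)$ (respectively $(d, d+1)$), $\tau$ fixes each $R[d]$ and each $R'[d]$ individually, whereas a direct inspection of the transpose action shows that $\tau$ interchanges $J[d]$ with $J'[d]$ (transposition swaps which of $S, T$ is the identity). Thus a nilpotent decomposition is $\rho$-stable precisely when, for every $d$, the multiplicity of $J[d]$ equals that of $J'[d]$.

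With this, the count becomes purely combinatorial. Group the summands of a $\rho$-stable decomposition into three blocks: the $R[\cdot]$'s, the $R'[\cdot]$'s, and the matched pairs $J[\cdot] \oplus J'[\cdot]$. Let $m_1, m_2, 2m_3$ denote their respective total contributions to $\dim V_1 = m$, so that $m_1 + m_2 + 2m_3 = m$. Once $(m_1, m_2, m_3)$ is fixed, specifying the block sizes amounts to selecting a partition of $m_1$, a partition of $m_2$, and a partition of $m_3$ independently, which contributes $p(m_1) p(m_2) p(m_3)$ orbits for that triple and yields the stated formula after summing.

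The one place the hypothesis $n \ge 2m$ enters is in fitting the $V_2$-dimension. The $R, R', J, J'$ pieces together contribute at most $2m$ to $\dim V_2$, so when $n \ge 2m$ any shortfall can be filled in by appending copies of the boundary case $d = 0$ of the type (5) indecomposable, which contributes $(0,1)$ to the dimension vector and so leaves the $V_1$-count undisturbed. If $n < 2m$ then some triples $(m_1, m_2, m_3)$ would become inadmissible and the count would strictly drop; verifying admissibility under $n \ge 2m$ is the only bookkeeping point, and it is immediate since the $V_2$-contribution of the non-padding summands is at most $2m \le n$.
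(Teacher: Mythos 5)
Your proposal is correct and follows essentially the same route as the paper: reduce to counting $\rho$-stable $G$-orbits via Lang--Steinberg and connectedness of stabilizers, observe that $\sigma$ fixes all nilpotent orbits so $\rho$-stability is $\tau$-stability, note that $\tau$ fixes each $R[d], R'[d]$ (by uniqueness of the indecomposable with a given unbalanced dimension vector) while swapping $J[d] \leftrightarrow J'[d]$, and then count by the three independent partitions with $(0,1)$-padding. If anything, your identification of the padding piece as the dimension-vector-$(0,1)$ indecomposable is slightly more careful than the paper's phrasing, which calls it $J'[1]$ (a $(1,1)$ piece, which would disturb the $m$-count); your version is the one consistent with the stated formula $m_1 + m_2 + 2m_3 = m$.
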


One can easily work out a similar formula for the case that $n < 2m$. 
 
We can now easily prove  Theorem \ref{A*A versus AA*}
 about the possible canonical forms for $AA^*$ and $A^*A$.  
 The proof is by induction on the number of "indecomposable" summands.
 If the orbit is indecomposable the result is clear.  
 
 If $mn=0$, the result is clear.   So assume this is not the case. 
 
 Suppose first that $\alpha_1 \ne \beta_1$ and $\alpha_1 \beta_1 > 1$. 
 Suppose that $\alpha_1 > \beta_1$.    This forces the orbit to have
 an indecomposable summand $R[\alpha_1]$.     Remove this piece and use
 induction to conclude the result.    A symmetric argument handles the case
 that $\beta_1 > \alpha_1$.  
 
 Suppose that $\alpha_1=\beta_1 $.   If $\alpha_1=1$ and there is no condition to satisfy.
 So assume that $\alpha_1=d > 1$.   Then there must  be a summand of the
 form $J[\alpha_1] + J'[\alpha_1]$ or $R[\alpha_1] + R'[\alpha_1]$.   Remove this
 piece and use induction.
 
 Conversely, we can construct any admissible pair of partitions by reversing the
 argument.
 
 We now prove Corollary \ref{ATA versus AAT}.   So let $\alpha$
 and $\beta$ be partitions of $m$ and $n$.   Suppose that there 
 exists $A \in M_{m \times n}(F)$ with  $AA^{\top}$ nilpotent and
 $\mu(AA^{\top}) = \alpha$ and $\mu(A^{\top}A) = \beta$.
 We first work over a finite field 
 $F$.   Let $E$ be a quadratic extension of $F$.   Then $A^{\top}=A^*$
 (with respect to the nontrivial automorphism of $E/F$) and so 
 Theorem  \ref{A*A versus AA*} gives the result.   This shows that the only
 allowable  possibilities
 for $\alpha, \beta$ over the algebraic closure of $F$ are as given.  
 
 By a usual compactness argument (quite elementary in this case), this shows
 the same is true over any algebraically closed field and the result follows.   
 It is clear that not every possibility can be achieved over an arbitrary field
 (e.g.  over $\mathbb{R}$ or more generally a field in which $-1$ is not a square).
 If $-1$ is a square, in fact every possibility can be realized as we shall see later.

\section{Further Remarks}

We first give an elementary proof of the decomposition into invertible and nilpotent parts.
The result already follows from the results of the previous section on orbits. 

Let $F$ be a finite field with $q^2$ elements.   Let $V, W$ be finite dimensional vector
spaces equipped with nondegenerate Hermitian forms.

\begin{lemma}   Let $T:V \rightarrow W$ be an $F$-linear transformation.
Then there exist canonical decompositions $V = V_1 \perp V_2$ and
$W=W_1 \perp W_2$ with $TV_1=W_1$  bijective and $TV_2 \subset W_2$  
and $T^*T$ nilpotent on $V_2$.
\end{lemma}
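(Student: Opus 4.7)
The plan is to bootstrap the whole decomposition out of the primary decomposition for the Hermitian operator $T^{*}T$ on $V$, which was recorded as the corollary in Section~\ref{hermitian}. The first observation is that $T^{*}T$ is self-adjoint, since $(T^{*}T)^{*} = T^{*}(T^{*})^{*} = T^{*}T$. Applying the corollary to $T^{*}T$ produces a unique orthogonal decomposition $V = V_1 \perp V_2$ with $T^{*}T$ bijective on $V_1$ and nilpotent on $V_2$; this is the canonical decomposition of $V$ we want, and it already gives the last clause of the lemma ($T^{*}T$ nilpotent on $V_2$). Because $V_1 \perp V_2$, the Hermitian form restricts nondegenerately to each summand.

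Next I would define $W_1 := T(V_1)$ and take $W_2 := W_1^{\perp}$. The key verifications are: (i) $T$ restricted to $V_1$ is injective, (ii) $W_1$ is a nondegenerate subspace of $W$, and (iii) $T(V_2) \subset W_2$. For (i), if $v \in V_1$ with $Tv = 0$, then $T^{*}Tv = 0$, so $v = 0$ by bijectivity of $T^{*}T$ on $V_1$; combined with $W_1 = T(V_1)$ this gives that $T : V_1 \to W_1$ is bijective. For (ii), suppose $w = Tv \in W_1$ with $v \in V_1$ is orthogonal to every element of $W_1$; then for all $v' \in V_1$,
\[
0 = (Tv', Tv) = (v', T^{*}Tv),
\]
and since $T^{*}Tv \in V_1$ and the form is nondegenerate on $V_1$, we conclude $T^{*}Tv = 0$, hence $v = 0$ and $w = 0$. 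For (iii), given $v_2 \in V_2$ and $v_1 \in V_1$,
\[
(Tv_2, Tv_1) = (v_2, T^{*}Tv_1),
\]
and $T^{*}Tv_1 \in V_1$ is orthogonal to $v_2 \in V_2$, so $Tv_2 \perp W_1$, i.e., $Tv_2 \in W_2$.

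Once (ii) is in hand, the decomposition $W = W_1 \perp W_2$ makes sense because a nondegenerate subspace splits off its orthogonal complement; the pieces $V_1, V_2, W_1, W_2$ then satisfy all the stated properties. Canonicity of the decomposition of $V$ follows from the canonicity of the primary decomposition of the Hermitian operator $T^{*}T$, and the decomposition of $W$ is canonical once $V_1$ is specified since $W_1 = T(V_1)$ is forced.

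The main obstacle, and really the only non-formal point, is verifying nondegeneracy of the form on $W_1$; everything else is a direct manipulation of the identity $(Tu, Tu') = (u, T^{*}Tu')$. The argument above handles it cleanly by using bijectivity of $T^{*}T$ on $V_1$ to pass from orthogonality in $W$ back to the nondegenerate form on $V_1$.
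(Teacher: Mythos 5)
Your proof is correct and follows essentially the same route as the paper's: decompose $V = V_1 \perp V_2$ via the primary decomposition of the Hermitian operator $T^*T$, set $W_1 = TV_1$ and $W_2 = W_1^\perp$, and verify nondegeneracy of $W_1$ by pushing orthogonality back through $T^*T$ to the nondegenerate form on $V_1$. The only cosmetic difference is that you invoke the earlier corollary for the decomposition of $V$, while the paper constructs $V_1$ and $V_2$ directly as the image and kernel of a large power of $T^*T$; the substance is identical.
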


\begin{proof} Let $S=T^*T$.  Then $S$ is an Hermitian operator on $V$.
Thus, $V=V_1 \perp V_2$ where $S$ is invertible on $V_1$ and $S$ is 
nilpotent on $V_2$ (e.g. $V_1$ is the image of sufficiently large power  $M$ of
$S$ and $V_2$ is the kernel of the same power).

It follows that $\ker(T) \cap V_1=0$.   Let $W_1=TV_1$.  We claim that
$W_1$ is nonsingular.   If $(Tv_1, Tv_2)=0$ with $v_1, v_2 \in V_1$, 
then $(T^*Tv_1, v_2)=0$
and if this holds for all $v_1 \in V_1$, then as $S=T^*T$ is bijective on $V_1$
we see that $v_2 \in V_1 \cap V_2^{\perp}=0$ and the claim is proved.

A similar argument shows that $W_1$ is orthogonal to $TV_2$ and so 
$TV_2 \subset W_2:= W_1^{\perp}$.   This completes the proof.
\end{proof} 

Suppose we are given $T$.   Let $X=T^*T$ and $Y=TT^*$.  Note
that the orbit of $T$ contains $S$ with $S^*S$ any Hermitian matrix
similar to $X$ and $SS^*$ similar to $Y$.    In particular,  if $X$ and $Y$
are similar (whence we are working with square matrices), 
there is an element in the orbit with $S^*S = SS^*$ (and so $S$ commutes
with $S^*$, i.e. $S$ is normal).   We record this:

\begin{lemma} Let $T:V \rightarrow V$ be an $F$-linear transformation.
Then the orbit of $T$ contains a normal matrix if and only if 
$T^*T$ and $TT^*$ are similar.   In particular, this is always the 
case if $T$ is invertible.
\end{lemma}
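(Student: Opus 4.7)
The plan is to take the setup from the paragraph immediately preceding the lemma as the main tool and combine it with Lemma \ref{hermitian1}. Recall that $GU(V) \times GU(V)$ acts by $T \mapsto XTY^*$, which gives
\[
(XTY^*)^*(XTY^*) = YT^*TY^* \qquad\text{and}\qquad (XTY^*)(XTY^*)^* = XTT^*X^*,
\]
since $X^*X = Y^*Y = I$. Hence as $S$ ranges over the orbit of $T$, the matrix $S^*S$ ranges exactly over the $GU(V)$-conjugacy class of $T^*T$, and $SS^*$ ranges exactly over the $GU(V)$-conjugacy class of $TT^*$. Both $T^*T$ and $TT^*$ are Hermitian, so by the last assertion of Lemma \ref{hermitian1} two such matrices are $GU(V)$-conjugate if and only if they are similar.

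For the forward direction (which is really the only content), assume $T^*T$ and $TT^*$ are similar. Since they are Hermitian, the moreover part of Lemma \ref{hermitian1} gives $Y \in GU(V)$ with $YT^*TY^{-1} = TT^*$; because $Y^{-1} = Y^*$, this reads $YT^*TY^* = TT^*$. Taking $X = I$ and setting $S := TY^*$, a direct computation gives
\[
S^*S = YT^*TY^* = TT^* \qquad\text{and}\qquad SS^* = TY^*YT^* = TT^*,
\]
so $S$ is normal and lies in the orbit of $T$.

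For the converse, if some $S$ in the orbit of $T$ is normal, then $S^*S = SS^*$, while the remarks above show $S^*S \sim T^*T$ and $SS^* \sim TT^*$ (in fact they are $GU$-conjugate, hence similar). Chaining these similarities gives $T^*T \sim TT^*$. Finally, when $T$ is invertible we immediately have $T(T^*T)T^{-1} = TT^*$, so $T^*T$ and $TT^*$ are conjugate via $T$ and in particular similar, giving the "in particular" clause.

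There is no real obstacle: the entire argument is just bookkeeping once one invokes Lemma \ref{hermitian1} to upgrade similarity of Hermitian matrices to $GU$-conjugacy. The only point to be careful about is that we need the conjugating element to lie in $GU(V)$ (not merely in $GL(V)$), so that it produces an element of the orbit; this is exactly what the "moreover" clause of Lemma \ref{hermitian1} supplies.
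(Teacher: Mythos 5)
Your proof is correct and takes essentially the same route as the paper's: the paper's proof is the short remark preceding the lemma, which observes that $S^*S$ and $SS^*$ can be made to equal any Hermitian matrices in the respective similarity classes (via the $\GU(V)\times\GU(V)$-action and Lemma \ref{hermitian1}), so when $T^*T\sim TT^*$ one can arrange $S^*S=SS^*$. You make this explicit by taking $X=I$, $S=TY^*$ with $Y\in\GU(V)$ conjugating $T^*T$ to $TT^*$, and checking directly that $S^*S=SS^*=TT^*$; this is a tidier presentation of the same argument.
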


It follows from the classification in the last section that it is possible for
$T^*T$ to be a single Jordan block of size $n>1$ and then necessarily 
$TT^*$ would have two Jordan blocks of sizes $n-1$ and $1$ and so there
are no normal matrices in the orbit.

If $T$ is invertible, by the classification and the previous lemma, we can reduce to the case
where $H:=T^*T$ is non-derogatory (i.e. the minimal and characteristic polynomials coincide)
 and $T$ is normal.   Thus,  $T=f(H)$ is a polynomial
in $H$ and $T^*={\bar f}(H)$.   

For the nilpotent case, there do not seem to be "nice" canonical normal forms, but
we give a different proof for the existence in the indecomposable case.  

\begin{lemma}  Fix a positive integer $d$.   If $H$ is a Hermitian $d \times d$ matrix
in $M_{d}(F)$ of rank $d-1$, then there exists a column vector $v$ of size $d$ such that
$H -  vv^*$ is invertible.  
\end{lemma}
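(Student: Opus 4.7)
The plan is to reduce the invertibility of $H - vv^*$ to a simple non-orthogonality condition on $v$, by exploiting that $H$ Hermitian forces $\mathrm{Im}(H) = (\ker H)^{\perp}$.

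First, since $H$ has rank $d-1$, its kernel is one-dimensional; pick a nonzero vector $u$ with $Hu = 0$, so $\ker H = Fu$. Because $H$ is Hermitian, $\mathrm{Im}(H)$ is precisely the orthogonal complement $u^{\perp}$ of $u$ with respect to the Hermitian form (the dimensions match, and $(Hx, u) = (x, Hu) = 0$ for all $x$).

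Next, I would propose the choice: take any column vector $v$ with $v^{*}u \neq 0$, equivalently $v \notin u^{\perp} = \mathrm{Im}(H)$. Such a $v$ exists because $u^{\perp}$ is a proper subspace. To verify invertibility, suppose $(H - vv^{*})w = 0$, i.e.\ $Hw = (v^{*}w)\,v$. The left-hand side lies in $\mathrm{Im}(H) = u^{\perp}$, while $v \notin u^{\perp}$; hence the coefficient $v^{*}w$ must vanish. But then $Hw = 0$, so $w = \lambda u$ for some $\lambda \in F$, and $0 = v^{*}w = \lambda \cdot v^{*}u$ forces $\lambda = 0$ since $v^{*}u \neq 0$. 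Thus $w = 0$ and $H - vv^{*}$ is invertible.

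There is no real obstacle here beyond noticing that the two potentially awkward conditions on $v$ (that $v$ not be in the image of $H$, and that $v^{*}u \neq 0$) are, by the Hermitian property, the \emph{same} condition. Everything else is a short kernel chase, and the argument is valid over any field supporting a nondegenerate Hermitian form, not just $F$.
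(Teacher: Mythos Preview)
Your argument is correct and follows essentially the same kernel chase as the paper's proof. You have in fact streamlined it slightly: the paper imposes the two conditions $v \notin \mathrm{Im}(H)$ and $v^{*}u \neq 0$ separately and then invokes the fact that a vector space is not the union of two proper subspaces, whereas you observe (using the Hermitian property) that these two conditions define the \emph{same} hyperplane, so only one needs to be avoided.
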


\begin{proof}   Suppose $Hw = vv^*w$  with $w \ne 0$.   Then either 
$w \in \ker(H)$ and $v^*w=0$ or $v$ is in the image of $H$.  Thus, we just
need to guarantee that there is a vector $v$ outside the hyperplane $\mathrm{im}(H)$
and $v^*w \ne 0$ for $0 \ne w \in \ker(H)$.  This is true since a finite dimensional
vector space is not the union of two proper subspaces.
\end{proof}

\begin{corollary}  Fix a positive integer $d$.   Then there exists $B \in M_{d \times d+1}(F)$
such that $BB^*$ and $B^*B$ are nilpotent matrices with a single Jordan block.
\end{corollary}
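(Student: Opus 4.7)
The plan is to construct $B$ in bordered form $B = (A \mid v) \in M_{d\times(d+1)}(F)$, where $A \in M_d(F)$ is invertible and $v \in F^d$ is chosen so that $BB^* = AA^* + vv^*$ is exactly a prescribed nilpotent Hermitian matrix; surjectivity of $B$ (which is forced by the invertibility of $A$) will then pin down the Jordan structure of $B^*B$.

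First I would apply Lemma~\ref{hermitian1} to a single nilpotent Jordan block of size $d$ (whose entries lie in $\FF_q$) to produce a Hermitian matrix $H_1 \in M_d(F)$ similar to it; in particular $H_1$ is nilpotent with a single Jordan block of size $d$ and has rank $d-1$. Applying the just-proved lemma to $H_1$ then yields a column vector $v \in F^d$ such that $H_1 - vv^*$ is an invertible Hermitian matrix. By Lemma~\ref{hermitian2}, one can write $H_1 - vv^* = X^*X$ for an invertible $X \in M_d(F)$; setting $A := X^*$ gives an invertible $A$ with $AA^* = H_1 - vv^*$. Putting $B = (A \mid v)$, the block identity
$$
BB^* = AA^* + vv^* = H_1
$$
holds on the nose, so $BB^*$ is a single nilpotent Jordan block of size $d$, as required.

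To analyze $B^*B$, I would combine two ingredients. The Sylvester-type identity $\det(xI_{d+1} - B^*B) = x\cdot\det(xI_d - BB^*) = x^{d+1}$ shows that $B^*B$ is nilpotent. Second, invertibility of $A$ implies $B$ is surjective onto $F^d$, and this forces $\ker(B^*B) = \ker B$: if $B^*Bx = 0$ then $(By)^*(Bx) = 0$ for every $y \in F^{d+1}$, and surjectivity of $B$ together with nondegeneracy of the standard Hermitian form on $F^d$ force $Bx = 0$. Since $B$ has shape $d \times (d+1)$ and rank $d$, we get $\dim \ker(B^*B) = 1$, so the nilpotent $(d+1) \times (d+1)$ matrix $B^*B$ has a unique Jordan block, necessarily of size $d+1$.

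The one subtle point is the equality $\ker(B^*B) = \ker B$: over a finite field the standard Hermitian form is not definite, so this identity can fail without surjectivity of $B$; arranging $A$ to be invertible (whence $B$ is surjective) is precisely what the preceding lemma buys us, which explains why the construction is set up in this bordered shape.
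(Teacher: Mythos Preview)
Your proof is correct and follows essentially the same route as the paper: produce a Hermitian nilpotent single-block $H$ via Lemma~\ref{hermitian1}, use the preceding lemma to find $v$ with $H - vv^*$ invertible, write this as $AA^*$ via Lemma~\ref{hermitian2}, and set $B=(A\mid v)$ so that $BB^*=H$; then invoke $\mathrm{rank}(B)=d$ to conclude $\ker(B^*B)=\ker B$ is one-dimensional. The only differences are cosmetic: you justify nilpotency of $B^*B$ via the Sylvester determinant identity and spell out $\ker(B^*B)=\ker B$ through nondegeneracy of the form plus surjectivity of $B$, whereas the paper simply notes that $B^*$ is injective.
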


\begin{proof}    By Lemma \ref{hermitian1},  there exists an Hermitian $d \times d$ matrix
over $F$ so that $H$ is nilpotent with a single Jordan block.    
By the previous result,  there exist a column vector
of size $d$ with $H + vv^*$ invertible.   So $H - vv^*= AA^*$ for some $ A \in GL_d(F)$
by Lemma \ref{hermitian2}.

Set $B = [ A | v ] \in M_{d \times d+1}(F)$.   Then. $BB^* = AA^*+ vv^* =H$ is nilpotent
with a single Jordan block.  Thus  $B^*B$ is nilpotent.   Since $B$ has rank $d$, 
it follows that $B^*B$ has a $1$ dimensional kernel (since $B^*$ is injective and $B$
has a $1$ dimensional null space).   Thus,  $B^*B$ is a single Jordan block.
\end{proof}

Now assume that we are over an arbitrarily field $K$ and $*$ is replaced by transpose.
The same proof applies with $H$ replaced by a symmetric matrix.  We note any that nonzero square
matrix (in particular any nilpotent matrix) is similar to a symmetric matrix (over some extension
field) \cite{TZ}.   Over some extension field any symmetrix matrix (in characteristic $2$,
we also need to ensure the matrix is not skew) can be written at $AA^{\top}$ (again
possibly passing to an extension field).   This yields:

\begin{corollary}  Let $K$ be an algebraically closed field.
Fix a positive integer $d$.   Then there exists $B \in M_{d \times d+1}(K)$
such that $BB^{\top}$ and $B^{\top}B$ are both nilpotent matrices with a single
Jordan block.
\end{corollary}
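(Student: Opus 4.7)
The plan is to adapt the preceding corollary's proof to an arbitrary algebraically closed field $K$, replacing Hermitian conjugation by transpose and the Hermitian form by the standard symmetric bilinear form $(v,w) \mapsto v^{\top}w$. The three ingredients to adapt are: the existence of a symmetric nilpotent matrix similar to a single Jordan block, the rank-one perturbation lemma producing $v$ with $H - vv^{\top}$ invertible, and the factorization of an invertible symmetric matrix as $AA^{\top}$.

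First, the Taussky--Zassenhaus result \cite{TZ} recalled just above the statement gives that every nilpotent square matrix is similar to a symmetric one over a suitable extension field; since $K$ is algebraically closed, one obtains $H \in M_d(K)$ symmetric and similar to a single nilpotent Jordan block of size $d$, directly over $K$. Second, the rank-one perturbation lemma goes through verbatim with $*$ replaced by $\top$: if $(H - vv^{\top})w = 0$ with $w \ne 0$, then either $v^{\top}w = 0$, forcing $w \in \ker(H)$ and $v \in \ker(H)^{\perp}$, or else $v \in \Im(H)$. Since $K$ is infinite, $K^d$ is not the union of the hyperplane $\Im(H)$ (of dimension $d-1$, as $H$ has rank $d-1$) and the hyperplane $\ker(H)^{\perp}$, so some $v$ makes $H - vv^{\top}$ invertible. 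Third, I would factor $M := H - vv^{\top} = AA^{\top}$ with $A \in \GL_d(K)$, set $B := [\,A \mid v\,] \in M_{d \times d+1}(K)$, and observe that $BB^{\top} = AA^{\top} + vv^{\top} = H$ is a single nilpotent Jordan block, while $B^{\top}B$ is nilpotent of rank $d$ with one-dimensional kernel, and so must be a single Jordan block of size $d+1$.

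The main obstacle is the final factorization step in characteristic $2$, where a symmetric matrix $M$ is of the form $AA^{\top}$ only if $M$ is non-alternating: the diagonal entries of $AA^{\top}$ are $\sum_j A_{ij}^2$, so an invertible alternating matrix (symmetric with zero diagonal) is never of this form. To circumvent this, I would impose one further open condition on $v$, namely that the diagonal entry $H_{ii} - v_i^2$ of $H - vv^{\top}$ be nonzero for some $i$. This adds one more proper subvariety of $K^d$ to be avoided, and since $K$ is infinite the complement of the union of all three avoidance conditions is non-empty. With a non-alternating invertible symmetric $M$, the classical diagonalization of symmetric bilinear forms over an algebraically closed field (valid even in characteristic $2$ when the form is non-alternating) produces $A$ with $M = AA^{\top}$, completing the construction of $B$.
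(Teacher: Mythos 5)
Your proof is correct and follows the same strategy as the paper: use Taussky--Zassenhaus to obtain a symmetric nilpotent $H$ with a single Jordan block, apply the rank-one perturbation lemma with $*$ replaced by $\top$ to make $H - vv^{\top}$ invertible, factor this as $AA^{\top}$, and set $B = [\,A \mid v\,]$. One small redundancy: since $H$ is symmetric, $\Im(H) = (\ker H)^{\perp}$ with respect to the standard form, so your two avoidance conditions ``$v \notin \Im(H)$'' and ``$v^{\top}w \ne 0$'' are the same hyperplane, and you are actually only avoiding that one hyperplane plus the extra characteristic-$2$ condition.

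Where you genuinely add something is in the characteristic $2$ case. The paper only remarks parenthetically that one must ``ensure the matrix is not skew'' without saying how, and its later sketch (replacing the anti-diagonal Gram matrix $S$ by $S(I+J)$) is aimed at the stronger statement that $-1$ being a square suffices, not at this corollary directly. Your fix --- imposing the additional open condition that some diagonal entry $H_{ii} - v_i^2$ be nonzero, so that $H - vv^{\top}$ is non-alternating and hence congruent to $I$ over the algebraically closed field --- is exactly the right way to fill the gap, and note that over a perfect field of characteristic $2$ the bad set $\{v : v_i^2 = H_{ii} \ \forall i\}$ is in fact a single point, so it is certainly avoidable. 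Your write-up is cleaner and more self-contained than the paper's on this point.
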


This shows that the necessary condition of Corollary \ref{ATA versus AAT} is also
sufficient at least over some extension field.  In fact, all that is required is that $-1$
is a square in the field (and thinking of the $1 \times 2$ this is necessary).   We sketch
the proof of this.

Let $J$ be a nilpotent $d \times d$ matrix over a field $K$ that is a single
Jordan block (taking $J$ to have $1$'s just above the diagonal).  Let. $S$
be the symmetric matrix with the only nonzero entries being $1$ on the anti-diagonal.
Then $SJ=J^{\top}S$ and so $J$ is a self adjoint operator with respect to the
bilinear form given by $S$. 

If the characteristic is not $2$ and $-1$ is a square, then 
 then $S = X^{\top}X$ for some
$X$. (i.e $S$ is congruent to $I$).   Thus,  some conjugate $L$ of $J$ is symmetric
and moreover the Gram matrix of this form is easily seen to be congruent to
$\mathrm{diiag}(1, \ldots, 1, 0)$ (this is again using that $-1$ is a square).

In characteristic $2$, the same argument applies for $d$ odd, but if $d$ is even,
then $S$ is skew symmetric and the argument does not apply.  In that case,
we replace $S$ by $S(I+J)$ which will be symmetric and invertible with nonzero
trace.  In a perfect field of characteristic $2$, any invertible symmetric matrix
that is not skew can be written as $X^{\top}X$.  

Thus. $L + vv^{\top}$ is congruent to $I$
and so can be written as $Y^{\top}Y$.  Now argue as above.

\end{document}